\DeclareMathOperator{\Lip}{Lip}
\DeclareMathOperator{\lip}{lip}
\DeclareMathOperator{\Ext}{Ext}
\DeclareMathOperator{\diam}{diam}
\DeclareMathOperator{\dist}{dist}
\DeclareMathOperator{\spn}{span}
\newcommand{\weaks}{\textit{w}$^\ast$}
\newcommand{\abs}[1]{\left|{#1}\right|}
\newcommand{\set}[1]{\left\{{#1}\right\}}
\newcommand{\norm}[1]{\left\|{#1}\right\|}
\newcommand{\dual}[1]{{#1}^\ast}
\newcommand{\ddual}[1]{{#1}^{\ast\ast}}
\newcommand{\ball}[1]{B_{{#1}}}
\newcommand{\duality}[1]{\left<{#1}\right>}
\newcommand{\metex}[1]{\varepsilon({#1})}
\newcommand{\lipfree}[1]{\mathcal{F}({#1})}
\newcommand{\setdpq}[1]{D_{#1}}
\newtheorem{theorem}{Theorem}[section]
\newtheorem*{maintheorem*}{Main Theorem}
\newtheorem{lemma}[theorem]{Lemma}
\newtheorem{corollary}[theorem]{Corollary}
\newtheorem{proposition}[theorem]{Proposition}
\newtheorem*{claim}{Claim}
\theoremstyle{definition}
\newtheorem*{definition*}{Definition}
\newtheorem{definition}[theorem]{Definition}
\newtheorem{example}[theorem]{Example}
\newtheorem{question}{Question}
\begin{document}

\title[The preserved extremal structure of $\lipfree{X}$]{On the preserved extremal structure of Lipschitz-free spaces}

\author[R. J. Aliaga]{Ram\'on J. Aliaga}
\address[R. J. Aliaga]{Instituto de F\'isica Corpuscular (CSIC-UV)\\
C/ Catedr\'atico Jos\'e Beltr\'an 2, 46980 Paterna, Spain}
\address{Instituto Universitario de Matem\'atica Pura y Aplicada\\
Universitat Polit\`ecnica de Val\`encia\\
Camino de Vera S/N, 46022 Valencia, Spain}
\email{raalva@upvnet.upv.es}

\author[A. J. Guirao]{Antonio J. Guirao}
\address[A. J. Guirao]{Instituto Universitario de Matem\'atica Pura y Aplicada\\
Universitat Polit\`ecnica de Val\`encia\\
Camino de Vera S/N, 46022 Valencia, Spain}
\email{anguisa2@mat.upv.es}

\date{} 


\begin{abstract}
We characterize preserved extreme points of Lipschitz-free spaces $\lipfree{X}$ in terms of simple geometric conditions on the underlying metric space $(X,d)$. Namely, each preserved extreme point corresponds to a pair of points $p,q$ in $X$ such that the triangle inequality $d(p,q)\leq d(p,r)+d(q,r)$ is uniformly strict for $r$ away from $p,q$. For compact $X$, this condition reduces to the triangle inequality being strict. This result gives an affirmative answer to a conjecture of N. Weaver that compact spaces are concave if and only if they have no triple of metrically aligned points.
\end{abstract}


\subjclass[2010]{Primary 46B20; Secondary 46E15, 54E45}

\keywords{concave space, extremal structure, Lipschitz-free space, Lipschitz function, metric alignment, preserved extreme point}

\maketitle


\section{Introduction}

Given a pointed metric space $(X,d)$, i.e. one that has a designated base point $e$, the space $\Lip(X)$ of scalar valued Lipschitz functions on $X$ has a distinguished subspace $\Lip_0(X)$ consisting of those elements of $\Lip(X)$ that vanish at $e$. $\Lip_0(X)$ is then a Banach space endowed with the norm $L(f)$ given by the tightest Lipschitz constant of $f$, and different choices of base points lead to linearly isometric Banach spaces via the map $f\mapsto f-f(e)$.

It is well-known that $\Lip_0(X)$ is a dual space, and its canonical predual is $\lipfree{X}=\overline{\spn}\,j(X)\subset\dual{\Lip_0(X)}$, where $j\colon X\rightarrow\dual{\Lip_0(X)}$ maps each $x\in X$ to its evaluation operator $j(x)\colon f\mapsto f(x)$. Following \cite{godefroy_kalton_2003}, we call $\lipfree{X}$ the \emph{Lipschitz-free space over $X$}. Note that $j$ is a (non-linear) isometric embedding of $X$ into a linearly dense subset of $\lipfree{X}$ and, in fact, this is a universal property of $\lipfree{X}$: every non-expansive map from $X$ into a Banach space that maps $e$ to $0$ can be factored through $j$ \cite[Th. 2.2.4]{weaver}. For a recent survey on Lipschitz-free Banach spaces see \cite{godefroy_2015}.

The extremal structure of the unit ball of $\lipfree{X}$ reveals important details about the geometry of $X$. Of particular interest are the \emph{preserved extreme points}, i.e. those points of $\Ext(\ball{\lipfree{X}})$ that are also extreme points of $\ball{\ddual{\lipfree{X}}}=\ball{\dual{\Lip_0(X)}}$. For instance, their properties are used in \cite[Sections 2.6 and 2.7]{weaver} to obtain metric versions of the Banach-Stone theorem for $\Lip$ and $\Lip_0$ spaces under various hypotheses. Further information about preserved and unpreserved extreme points can be found in the recent survey \cite{GMZ14}.

When $X$ is complete, any preserved extreme points of $\ball{\lipfree{X}}$ are necessarily of the form
\begin{equation*}
u_{pq}:=\frac{j(p)-j(q)}{\norm{j(p)-j(q)}}=\frac{j(p)-j(q)}{d(p,q)}
\end{equation*}
for distinct $p,q\in X$ \cite[Cor. 2.5.4]{weaver}; the completeness of $X$ is crucial for this. In this paper, we study the geometric conditions under which these elements of $B_{\lipfree{X}}$ are indeed preserved extreme points. They can be stated in a simple form if we allow an abuse of notation and extend the metric function $d$ from pairs of points in $X$ to its Stone-\v{C}ech compactification $\beta X$. Our main result is the following:

\begin{maintheorem*}[cf. Theorem \ref{tm:pep_fx}]
If $X$ is a complete pointed metric space, then the preserved extreme points of $\ball{\lipfree{X}}$ are precisely the elements $u_{pq}$ where $p,q$ are distinct points of $X$ such that $d(p,q)<d(p,r)+d(q,r)$ for all $r\in\beta X\setminus\set{p,q}$.
\end{maintheorem*}

In terms of the geometry within $X$, this characterization is equivalent to the triangle inequality being uniformly strict for $r$ away from $p$ and $q$; the precise statement is given in Lemma \ref{lm:pq_concave}.

As a consequence of this result, in Corollary \ref{co:concave} we solve in the positive a conjecture of N. Weaver stating that compact spaces such that $d(p,q)<d(p,r)+d(r,q)$ for any triple of distinct points $p,q,r$ are concave \cite[Open Problem in p. 53]{weaver}. Another implication is that all extreme points of $\ball{\lipfree{X}}$ of the form $u_{pq}$ are preserved when $X$ is compact (Theorem \ref{tm:pep_fk}).

Moreover, we also find a sufficient condition for $u_{pq}$ to be a preserved extreme point (Proposition \ref{pr:local_peaking}) that improves the well-known \cite[Prop. 2.4.2]{weaver}, replacing the single, globally peaking function with a family of functions that peak locally. Example~\ref{ex:ap_aq} implies that neither of these results are characterizations, even in the compact case.

Throughout the paper, $X$ will denote a metric space with metric $d$; if $X$ is pointed, its base point will be denoted by $e$. We will use standard notation: $\ball{Y}$ for the closed unit ball of normed space $Y$, and $\duality{\dual{x},x}$ for the evaluation of the functional $\dual{x}$ at the point $x$. We will also restrict ourselves to the case of real scalars. The main reason is that this supports the following metric version of Tietze's extension theorem, which is not valid in the complex case \cite[p. 18]{weaver}.
\begin{proposition}[{\cite[Th. 1.5.6]{weaver}}]
\label{pr:metric_tietze}
Let $X$ be a metric space and $Y\subset X$. Then every $f\colon Y\rightarrow\mathbb{R}$ can be extended to $X$ in such a way that $L(f)$ and $\norm{f}_\infty$ are preserved.
\end{proposition}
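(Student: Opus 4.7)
The plan is to combine the classical McShane--Whitney Lipschitz extension formula with a truncation argument. The McShane--Whitney construction alone preserves $L(f)$ but typically blows up $\norm{f}_\infty$, so an additional step is needed to control the sup norm.

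First I would write $L=L(f)$ and $M=\norm{f}_\infty$, and define the McShane extension
\begin{equation*}
\tilde{f}(x):=\inf_{y\in Y}\bigl(f(y)+L\,d(x,y)\bigr),\qquad x\in X.
\end{equation*}
A short direct check shows that $\tilde{f}$ takes finite real values (the infimum is bounded below by $f(y_0)-L\,d(x,y_0)$ for any fixed $y_0$), that $\tilde{f}\vert_Y=f$ (using $\tilde{f}(y)\leq f(y)$ from the term $y'=y$, and $\tilde{f}(y)\geq f(y)$ from the Lipschitz condition on $f$), and that $L(\tilde{f})\leq L$ by the standard two-infimum comparison.

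Next I would truncate to restore the sup norm: set
\begin{equation*}
g(x):=\max\bigl(-M,\min(M,\tilde{f}(x))\bigr),\qquad x\in X.
\end{equation*}
The map $t\mapsto\max(-M,\min(M,t))$ is $1$-Lipschitz on $\mathbb{R}$, so $L(g)\leq L(\tilde{f})\leq L$. Since $\abs{f(y)}\leq M$ for all $y\in Y$ and $\tilde{f}$ already agrees with $f$ on $Y$, the truncation does nothing there and $g\vert_Y=f$. By construction $\norm{g}_\infty\leq M$, and the reverse inequality is automatic because $g$ extends $f$. Finally, $L(g)\geq L(f)=L$ since $g$ extends $f$, so equality of Lipschitz constants holds.

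There is no real obstacle here; the only thing to be careful about is that the truncation must be a $1$-Lipschitz operation on the real line so that it does not increase $L(\tilde{f})$, and that the truncation level be chosen exactly at $\pm M$ so that $g$ still interpolates $f$ on $Y$. Both are immediate. Note the argument genuinely relies on real scalars, as warned by the excerpt: the analogue of the truncation $t\mapsto\max(-M,\min(M,t))$ on $\mathbb{C}$ (projection onto the disk of radius $M$) is $1$-Lipschitz, but the McShane formula itself uses the order structure of $\mathbb{R}$ through the infimum, and no satisfactory substitute exists in the complex setting.
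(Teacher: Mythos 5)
Your proof is correct, and it is essentially the standard argument behind the cited result \cite[Th. 1.5.6]{weaver}: the paper itself gives no proof but simply quotes Weaver, whose proof is exactly this McShane-type inf-convolution extension preserving $L(f)$ followed by truncation at $\pm\norm{f}_\infty$, which is $1$-Lipschitz on $\mathbb{R}$ and fixes the values on $Y$. All the verifications you sketch (finiteness of the infimum, agreement on $Y$, the two-infimum estimate for the Lipschitz constant, and the reverse inequalities coming from $g$ being an extension) are the right ones, so there is nothing to add beyond noting the degenerate case of unbounded $f$, where the truncation step is simply omitted.
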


For the non-defined notions used through this article, we refer to \cite{LosCinco}.

\section{Metric alignment and extremal structure}

\begin{definition}
Let $X$ be a metric space and $p,q,r\in X$. We say that \emph{$r$ lies between $p$ and $q$} if $d(p,r)+d(r,q)=d(p,q)$; if $r$ is neither $p$ nor $q$, we say that it lies \emph{strictly} between $p$ and $q$. We also say that three distinct points of $X$ are \emph{metrically aligned} if one of them lies strictly between the other two. The \emph{metric segment} $[p,q]$ is defined as the set of all points of $X$ that lie between $p$ and $q$.
\end{definition}

Observe that this definition of metric alignment coincides with the intuitive notion of alignment in the Euclidean plane or space. More generally, if $X$ is a subset of a strictly convex normed space, then $p,q,r$ are metrically aligned if and only if they are \emph{linearly aligned}, i.e. if they span an affine subspace of dimension 1 instead of 2, or equivalently, if $p-r$ and $q-r$ are linearly dependent.

We also introduce the notation
\begin{equation*}
\metex{r;p,q}:=d(r,p)+d(r,q)-d(p,q) \text{.}
\end{equation*}
Note that $\metex{r;p,q}\geq 0$, and $\metex{r;p,q}=0$ if and only if $r$ lies between $p$ and $q$. Note also that $[p,q]$ is closed, always contains $p$ and $q$, and it is possible for it to contain no other point. Finally, note that $\metex{x;p,q}\leq 2\dist(x,[p,q])$ for any $x\in X$; this is proven by adding the triangle inequalities $d(p,x)\leq d(p,r)+d(r,x)$ and $d(q,x)\leq d(q,r)+d(r,x)$ for $r\in [p,q]$.

Since the mapping $r\mapsto\metex{r;p,q}$ is continuous in $X$, it can be extended continuously to a mapping $\beta X\rightarrow[0,\infty]$, where $\beta X$ is the Stone-\v{C}ech compactification of $X$. Thus, for $\xi\in\beta X$, we will denote by $\metex{\xi;p,q}$ the result of applying that mapping to $\xi$, i.e. $\metex{\xi;p,q}=\lim_i\metex{x_i;p,q}$ if $\set{x_i:i\in I}$ is a net in $X$ that converges to $\xi$. We will then say that $\xi$ \emph{lies strictly between $p$ and $q$} if $\metex{\xi;p,q}=0$ and $\xi$ is neither $p$ nor $q$.

There is a strong relationship between metric alignment in $X$ and the extremal structure of $\ball{\lipfree{X}}$, as illustrated by the following result:

\begin{proposition}
\label{pr:aligned_combination}
Let $X$ be a pointed metric space and $p,q$ distinct points of $X$.
\begin{enumerate}[\upshape (a)]
\item If $u_{pq}$ is an extreme point of $\ball{\lipfree{X}}$, then no point of $X$ lies strictly between $p$ and $q$.
\item If $u_{pq}$ is a preserved extreme point of $\ball{\lipfree{X}}$, then no point of $\beta X$ lies strictly between $p$ and $q$.
\end{enumerate}
\end{proposition}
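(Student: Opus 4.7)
Both parts are proven by contrapositive: starting from a point strictly between $p$ and $q$, I exhibit a non-trivial convex decomposition of $u_{pq}$ contradicting extremality. For (a), if $r\in X$ lies strictly between $p$ and $q$ then $d(p,q)=d(p,r)+d(r,q)$, so splitting $j(p)-j(q)=(j(p)-j(r))+(j(r)-j(q))$ and dividing by $d(p,q)$ gives $u_{pq}=\lambda\, u_{pr}+(1-\lambda)\, u_{rq}$ with $\lambda=d(p,r)/d(p,q)\in(0,1)$. All three vectors lie in $\ball{\lipfree{X}}$, and evaluating against the $1$-Lipschitz function $f(x):=d(x,r)-d(e,r)\in\Lip_0(X)$ gives $\duality{u_{pr},f}=1$ and $\duality{u_{rq},f}=-1$, so the combination is non-trivial, contradicting extremality.

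For (b), fix $\xi\in\beta X\setminus\set{p,q}$ with $\metex{\xi;p,q}=0$ and a net $(x_i)\subset X$ converging to $\xi$ in $\beta X$. Set $\alpha:=d(p,\xi)$ and $\beta:=d(q,\xi)$; then $\alpha+\beta=d(p,q)$. A preliminary observation is that $\alpha,\beta>0$: the subspace topology on $X\subset\beta X$ coincides with the metric topology, so any $\xi'\in\beta X$ with $d(p,\xi')=0$ would be a metric limit of points approaching $p$, hence equal to $p$ in the Hausdorff space $\beta X$. By $w^\ast$-compactness of $\ball{\ddual{\lipfree{X}}}=\ball{\dual{\Lip_0(X)}}$, I pass to a subnet along which $j(p)-j(x_i)\xrightarrow{w^\ast}\phi$ and $j(x_i)-j(q)\xrightarrow{w^\ast}\psi$. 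Lower semicontinuity of the norm gives $\norm{\phi}\leq\alpha$ and $\norm{\psi}\leq\beta$, while $\phi+\psi=j(p)-j(q)$ has norm $\alpha+\beta$, so equality holds throughout, and
\[ u_{pq}=\tfrac{\alpha}{d(p,q)}\cdot\tfrac{\phi}{\alpha}+\tfrac{\beta}{d(p,q)}\cdot\tfrac{\psi}{\beta} \]
is a convex combination inside $\ball{\ddual{\lipfree{X}}}$.

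The main obstacle is showing this decomposition is non-trivial, i.e.\ $\phi\neq\alpha\, u_{pq}$. The plan is to test against the bounded $1$-Lipschitz function $f(x):=\min\set{d(x,p),d(x,q),\delta}-c$, where $\delta:=\min(\alpha,\beta)>0$ and $c$ is chosen so that $f(e)=0$: then $f(p)=f(q)=-c$ while $f(x_i)\to\delta-c$ along the subnet, so $\duality{\phi,f}=\lim_i(f(p)-f(x_i))=-\delta\neq 0$. On the other hand, every scalar multiple of $j(p)-j(q)$ evaluates to $0$ against $f$ since $f(p)=f(q)$. Hence $\phi$ is not proportional to $j(p)-j(q)$, the decomposition is genuinely non-trivial, and $u_{pq}$ fails to be extreme in $\ball{\ddual{\lipfree{X}}}$, contradicting the preserved-extreme hypothesis.
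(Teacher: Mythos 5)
Your proof is correct and follows essentially the same route as the paper's: part (a) is the identical splitting $u_{pq}=\frac{d(p,r)}{d(p,q)}u_{pr}+\frac{d(r,q)}{d(p,q)}u_{rq}$, and in part (b) your functionals $\phi,\psi$ obtained from a \weaks-convergent subnet of $j(p)-j(x_i)$ and $j(x_i)-j(q)$ are exactly the paper's $j(p)-\Lambda$ and $\Lambda-j(q)$, the only technical difference being that the paper produces $\Lambda$ by extending $j$ restricted to a bounded closed ball \weaks-continuously to its Stone--\v{C}ech compactification rather than by Banach--Alaoglu. Your explicit verification that the convex decompositions are non-trivial (testing against $d(\cdot,r)-d(e,r)$ and $\min\{d(\cdot,p),d(\cdot,q),\delta\}-c$) is a welcome extra detail that the paper leaves implicit.
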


\begin{proof}
(a) For any $r\in X\setminus\set{p,q}$ we have
\begin{equation*}
u_{pq}=\frac{j(p)-j(q)}{d(p,q)}=\frac{j(p)-j(r)}{d(p,q)}+\frac{j(r)-j(q)}{d(p,q)}=\frac{d(p,r)}{d(p,q)}\,u_{pr}+\frac{d(r,q)}{d(p,q)}\,u_{rq} \text{.}
\end{equation*}
If $d(p,q)=d(p,r)+d(q,r)$, then this expresses $u_{pq}$ as a convex combination of elements $u_{pr}$ and $u_{rq}$ of $\ball{\lipfree{X}}$ so it cannot be an extreme point.

(b) Suppose that $\metex{\xi;p,q}=0$ for some $\xi\in\beta X\setminus\set{p,q}$. Let $\set{x_i:i\in I}$ be a net in $X$ that converges to $\xi$. We may assume that $\metex{x_i;p,q}$ is bounded, hence so are $d(x_i,p)$ and $d(x_i,q)$. Thus the limits $d(p,\xi)=\lim_i d(p,x_i)$ and $d(\xi,q)=\lim_i d(x_i,q)$ exist and are finite and positive; moreover, $d(p,\xi)+d(\xi,q)=\metex{\xi;p,q}+d(p,q)=d(p,q)$.

Let $V$ be a closed ball with center in $e$ and a radius large enough to contain $p$, $q$ and all the $x_i$. The restricted operator $j|_V\colon V\rightarrow\dual{\Lip_0(X)}$ is \weaks-continuous and its range is contained in $\diam(V)\cdot\ball{\dual{\Lip_0(X)}}$ which is \weaks-compact. Hence $j|_V$ can be extended \weaks-continuously to $\beta V$, and in particular there is $\Lambda=j|_V(\xi)\in\dual{\Lip_0(X)}$ such that $\duality{\Lambda,f}=\lim_i f(x_i)$ for $f\in\Lip_0(X)$. For any $f\in\ball{\Lip_0(X)}$ we have
\begin{equation*}
\abs{\duality{j(p)-\Lambda,f}}=\lim_i\abs{f(p)-f(x_i)}\leq\lim_i d(p,x_i)=d(p,\xi)
\end{equation*}
and so $u_{p\xi}:=(j(p)-\Lambda)/d(p,\xi)$ is an element of $\ball{\dual{\Lip_0(X)}}$. Analogously, $u_{\xi q}:=(\Lambda-j(q))/d(\xi,q)\in\ball{\dual{\Lip_0(X)}}$. Then we can express
\begin{equation*}
u_{pq}=\frac{j(p)-j(q)}{d(p,q)}=\frac{j(p)-\Lambda}{d(p,q)}+\frac{\Lambda-j(q)}{d(p,q)}=\frac{d(p,\xi)}{d(p,q)}\,u_{p\xi}+\frac{d(\xi,q)}{d(p,q)}\,u_{\xi q} \text{,}
\end{equation*}
i.e. $u_{pq}$ is a convex combination of elements in $\ball{\dual{\Lip_0(X)}}$, so it cannot be a preserved extreme point.
\end{proof}

The condition in Proposition \ref{pr:aligned_combination}(b) essentially says that it is not possible to have $d(p,r_i)+d(q,r_i)\rightarrow d(p,q)$ unless $\set{r_i}$ clusters at $p$ or $q$. Equivalently, the triangle inequality is uniformly strict for $r$ away from $p,q$. The precise formulation is the following:

\begin{lemma}
\label{lm:pq_concave}
Let $X$ be a metric space and $p,q$ distinct points of $X$. Then the following are equivalent:
\begin{enumerate}[\upshape (i)]
\item no point of $\beta X$ lies strictly between $p$ and $q$,
\item for every $\varepsilon>0$ there is $\delta>0$ such that $\metex{r;p,q}\geq\delta$ whenever $r\in X$ satisfies $d(p,r)\geq\varepsilon$ and $d(q,r)\geq\varepsilon$.
\end{enumerate}
\end{lemma}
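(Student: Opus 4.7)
The plan is to move between $X$ and its Stone-\v{C}ech compactification $\beta X$, using the continuity on $\beta X$ of the extensions of $d(p,\cdot)$, $d(q,\cdot)$ and $\metex{\,\cdot\,;p,q}$ (all valued in $[0,\infty]$, constructed exactly as in the paragraph preceding the lemma), together with the compactness and Hausdorffness of $\beta X$. The key auxiliary fact I would record first is that $d(p,\xi)=0$ forces $\xi=p$ for any $\xi\in\beta X$: if a net $\{x_i\}\subset X$ converges to $\xi$ in $\beta X$, then by continuity $d(p,x_i)\to d(p,\xi)=0$, so $x_i\to p$ in the metric of $X$; since the embedding $X\hookrightarrow\beta X$ is continuous this gives $x_i\to p$ in $\beta X$, and uniqueness of limits in the Hausdorff space $\beta X$ yields $\xi=p$. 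The analogous statement holds for $q$.

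For (ii) $\Rightarrow$ (i) I would argue by contrapositive. Given $\xi\in\beta X\setminus\{p,q\}$ with $\metex{\xi;p,q}=0$, the auxiliary fact gives $d(p,\xi),d(q,\xi)>0$, so we can choose an $\varepsilon>0$ strictly smaller than both (and also smaller than $1$, to handle the case where one of these values is $+\infty$). For any prescribed $\delta>0$, continuity of the three extensions along a net $\{x_i\}\subset X$ converging to $\xi$ produces an index $i$ with $d(p,x_i)\geq\varepsilon$, $d(q,x_i)\geq\varepsilon$ and $\metex{x_i;p,q}<\delta$, which violates (ii) for this fixed $\varepsilon$.

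For (i) $\Rightarrow$ (ii), again by contrapositive: if (ii) fails, there is some $\varepsilon>0$ and a sequence $\{r_n\}\subset X$ with $d(p,r_n),d(q,r_n)\geq\varepsilon$ and $\metex{r_n;p,q}\to 0$. By compactness of $\beta X$ a subnet $\{r_{n_i}\}$ converges to some $\xi\in\beta X$, and continuity of the three extensions then gives $d(p,\xi),d(q,\xi)\geq\varepsilon>0$ together with $\metex{\xi;p,q}=0$. Hence $\xi$ is distinct from $p$ and $q$ and lies strictly between them, contradicting (i). The only delicate point I anticipate is the $[0,\infty]$-valued extension of the possibly unbounded functions $d(p,\cdot),d(q,\cdot)$ to $\beta X$, which is handled exactly as the text already does for $\metex{\,\cdot\,;p,q}$ (pre-compose with a homeomorphism $[0,\infty]\cong[0,1]$, extend the resulting bounded continuous function, and compose back); past that point the argument is a routine compactness/Hausdorffness manipulation.
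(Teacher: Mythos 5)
Your proposal is correct and follows essentially the same route as the paper: both directions are contrapositive net/compactness arguments in $\beta X$ using the continuity of the extended excess function $\metex{\,\cdot\,;p,q}$. Your only variation is to make explicit, via the $[0,\infty]$-valued extensions of $d(p,\cdot)$ and $d(q,\cdot)$, why the limit point $\xi$ is distinct from $p$ and $q$ (respectively, why a suitable $\varepsilon$ exists), a step the paper handles more tersely by subnet extraction and an appeal to ``clearly''.
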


\begin{proof}
Suppose (i) is false and there is $\xi\in\beta X\setminus\set{p,q}$ such that $\metex{\xi;p,q}=0$. Then there is a net $\set{x_i:i\in I}$ in $X$ such that $x_i\rightarrow\xi$ and $\metex{x_i;p,q}\rightarrow 0$. Choose $\varepsilon>0$ such that $d(x_i,p)>\varepsilon$ and $d(x_i,q)>\varepsilon$ eventually; such an $\varepsilon$ exists because $\set{x_i}$ would otherwise have a subsequence that converges to $p$ or $q$. Then (ii) is false for this $\varepsilon$.

Suppose now that (ii) is false, and choose $\varepsilon>0$ such that for every $n\in\mathbb{N}$ there is $r_n\in X$ such that $d(p,r_n)\geq\varepsilon$, $d(q,r_n)\geq\varepsilon$ and $\metex{r_n;p,q}<2^{-n}$. Let $\xi$ be a cluster point of $r_n$ in $\beta X$. Then clearly $\xi$ lies strictly between $p$ and $q$, so (i) is false.
\end{proof}

\section{Norm attainment of Lipschitz functions}

We borrow the following notation from \cite[Chapter 2]{weaver}: denote
\begin{equation*}
\widetilde{X}:=\set{(x,y):x,y\in X,x\neq y}
\end{equation*}
with the subspace topology of $X^2$, and define the map $\Phi\colon\Lip(X)\rightarrow C(\widetilde{X})$ by
\begin{equation*}
\Phi f(p,q):=\frac{f(p)-f(q)}{d(p,q)}=\duality{u_{pq},f}, {\,\,\text{ for }f\in\Lip(X)\text{ and } (p,q)\in\widetilde{X}.}
\end{equation*}
Note that $L(f)=\norm{\Phi f}_{\infty}$, so $\Phi$ is in fact a linear isometry from $\Lip_0(X)$ into $\ell^{\infty}(\widetilde{X})$. Moreover, since the function $\Phi f\in C(\widetilde{X})$ is bounded by $L(f)$, it can be extended continuously to $\beta\widetilde{X}$, the Stone-\v{C}ech compactification of $\widetilde{X}$; hence $\Phi f$ can be identified with an element in $C(\beta\widetilde{X})$, and $\Phi$ can be regarded as a map from $\Lip_0(X)$ into $C(\beta\widetilde{X})$. For arbitrary $\zeta\in\beta\widetilde{X}$, we will write $\Phi f(\zeta)$ to refer to the value at $\zeta$ of the extension of $\Phi f$; equivalently, $\Phi f(\zeta)=\lim_i \Phi f(x_i,y_i)$ if {$\set{(x_i,y_i):i\in I}$ is a net converging to $\zeta$} in $\beta\widetilde{X}$. Recall that the dual of $C(\beta\widetilde{X})$ is $M(\beta\widetilde{X})$, the space of real regular Borel measures on $\beta\widetilde{X}$, so that for each $\dual{x}\in\dual{\Lip_0(X)}$ there is a measure $\mu\in M(\beta\widetilde{X})$ of equal norm such that $\dual{\Phi}\mu=\dual{x}$, where $\dual{\Phi}\colon M(\beta\widetilde{X})\rightarrow\dual{\Lip_0(X)}$ is the adjoint operator of $\Phi$.

\begin{definition}
\label{def:norm_attainment}
Let $f\in\Lip(X)$, $f\neq 0$ and $\zeta\in\beta\widetilde{X}$. We say that $f$ \emph{attains its (Lipschitz) norm at $\zeta$} if $\abs{\Phi f(\zeta)}=L(f)$. We say that $f$ \emph{peaks at $(p,q)\in\widetilde{X}$} if it attains its norm at $(p,q)$ and, for every open $U\subset\widetilde{X}$ containing $(p,q)$ and $(q,p)$, there is $c<L(f)$ such that ${\abs{\Phi f(x,y)}\leq c}$ for all $(x,y)\in\widetilde{X}\setminus U$.
\end{definition}

Informally, $f$ peaks at $(p,q)$ if $\abs{\Phi f}$ is uniformly less than $L(f)$ away from $(p,q)$ and $(q,p)$. This is a strong condition, and it is a well-known result that it is sufficient to ensure the existence of preserved extreme points:

\begin{proposition}[{\cite[Prop. 2.4.2]{weaver}}]
\label{pr:weaver_peaking}
Let $X$ be a pointed metric space and suppose that there is a function in $\Lip_0(X)$ that peaks at $(p,q)\in\widetilde{X}$. Then $u_{pq}$ is a preserved extreme point of $\ball{\lipfree{X}}$.
\end{proposition}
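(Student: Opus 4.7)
The plan is to use the canonical representation of functionals on $\Lip_0(X)$ as measures on the Stone-\v{C}ech compactification $\beta\widetilde{X}$, exploiting the isometric embedding $\Phi$. To show that $u_{pq}$ is a preserved extreme point, it suffices to prove that it is extreme in $\ball{\dual{\Lip_0(X)}}$, since it already lies in $\ball{\lipfree{X}}$ with unit norm. So suppose $u_{pq}=\tfrac12(\Lambda_1+\Lambda_2)$ with $\Lambda_i\in\ball{\dual{\Lip_0(X)}}$; both $\Lambda_i$ must have unit norm since $\norm{u_{pq}}=1$. By Hahn-Banach extension from $\Phi(\Lip_0(X))\subset C(\beta\widetilde{X})$ followed by the Riesz representation theorem, each $\Lambda_i$ is represented by a measure $\mu_i\in M(\beta\widetilde{X})$ of the same norm, via $\duality{\Lambda_i,g}=\int_{\beta\widetilde{X}}\Phi g\,d\mu_i$ for every $g\in\Lip_0(X)$.

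Next I would use the peaking function $f$: after rescaling and possibly replacing $f$ by $-f$, assume $L(f)=1$ and $\Phi f(p,q)=1$, so that automatically $\Phi f(q,p)=-1$. From $\duality{u_{pq},f}=1$ and $\duality{\Lambda_i,f}\leq 1$ one deduces $\duality{\Lambda_i,f}=1$, i.e.\ $\int\Phi f\,d\mu_i=1$. The core of the argument is then to lift the peaking condition from $\widetilde{X}$ to $\beta\widetilde{X}$: for any $\zeta\in\beta\widetilde{X}\setminus\set{(p,q),(q,p)}$, Hausdorffness of $\beta\widetilde{X}$ provides disjoint opens $V\ni\zeta$ and $W\supset\set{(p,q),(q,p)}$; applying peaking to $U=W\cap\widetilde{X}$ bounds $\abs{\Phi f}$ by some $c<1$ on $\widetilde{X}\setminus W$, and density of $\widetilde{X}$ in $\beta\widetilde{X}$ together with continuity of the extension propagates this bound to all of $V$, hence $\abs{\Phi f(\zeta)}\leq c<1$. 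Combining this with $\int\Phi f\,d\mu_i=1$, $\norm{\mu_i}=1$, and $\abs{\Phi f}\leq 1$, a Jordan decomposition analysis of the equality case in $\int\Phi f\,d\mu_i\leq\int\abs{\Phi f}\,d\abs{\mu_i}\leq\norm{\mu_i}$ forces $\mu_i=a_i\delta_{(p,q)}-b_i\delta_{(q,p)}$ with $a_i,b_i\geq 0$ and $a_i+b_i=1$.

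The conclusion is then immediate: for any $g\in\Lip_0(X)$, using $\Phi g(q,p)=-\Phi g(p,q)$, one gets $\duality{\Lambda_i,g}=a_i\Phi g(p,q)-b_i\Phi g(q,p)=(a_i+b_i)\Phi g(p,q)=\duality{u_{pq},g}$, whence $\Lambda_1=\Lambda_2=u_{pq}$ and extremeness in $\ball{\dual{\Lip_0(X)}}$ follows. The main obstacle is the middle step: correctly lifting the peaking condition from $\widetilde{X}$ to the compactification, where one must carefully mediate between the topology of $\widetilde{X}$ and the continuous extensions to $\beta\widetilde{X}$. Once that is in place, the rest is a standard norming argument for signed measures against a function peaking at its sup-norm on a compact Hausdorff space.
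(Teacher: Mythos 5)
Your proof is correct and follows essentially the same route as the paper's treatment of this circle of results: the paper itself only cites Weaver for this proposition, but its Lemma \ref{lm:measure_support} and Proposition \ref{pr:local_peaking} run precisely your argument --- represent the factors of a convex decomposition by norm-equal measures on $\beta\widetilde{X}$, lift the peaking bound to the compactification, and force the measures to be supported on $\set{(p,q),(q,p)}$, hence multiples of $u_{pq}$. The only difference is cosmetic: with a single peaking function you can obtain the support statement directly from the equality case in $\int\Phi f\,d\mu_i\leq\norm{\mu_i}$, whereas the paper's Lemma \ref{lm:measure_support} uses a covering/regularity argument because it must handle a whole family of norm-attaining functions.
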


We wish to generalize this result by finding weaker sufficient conditions. In order to do this, for a given $(p,q)\in\widetilde{X}$ we will consider the set
\begin{align*}
\setdpq{(p,q)}:=\big\{\zeta\in\beta\widetilde{X}: \text{if $f\in\Lip_0(X)$ attains its norm at $(p,q)$,} & \\
\text{then $f$ also attains its norm at $\zeta$} & \big\} \text{.}
\end{align*}
Notice that $\setdpq{(p,q)}$ is closed, hence compact. Notice also that $(p,q)$ and $(q,p)$ are always in $\setdpq{(p,q)}$. It is possible for $\setdpq{(p,q)}$ to contain no other points beside these two;  this happens, for instance, when there is $f\in\Lip_0(X)$ that peaks at $(p,q)$, as that same $f$ shows that every other $\zeta\in\beta\widetilde{X}$ fails to fulfill the condition in the definition. A refinement of the argument used in the proof of {\cite[Prop. 2.4.2]{weaver}} yields the following:

\begin{lemma}
\label{lm:measure_support}
Let $X$ be a pointed metric space and $(p,q)\in\widetilde{X}$. Suppose that $u_{pq}=\lambda\dual{x}_1+(1-\lambda)\dual{x}_2$ for some $\lambda\in(0,1)$ and $\dual{x}_1,\dual{x}_2\in\ball{\dual{\Lip_0(X)}}$. Then there are $\mu_1,\mu_2\in\ball{M(\beta\widetilde{X})}$ concentrated on $\setdpq{(p,q)}$ such that $\dual{x}_1=\dual{\Phi}\mu_1$ and $\dual{x}_2=\dual{\Phi}\mu_2$.
\end{lemma}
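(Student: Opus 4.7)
The plan is to obtain $\mu_1$ and $\mu_2$ by Hahn--Banach lifting and then pin down their supports through an equality-case analysis in the Jordan decomposition. Since $\Phi\colon\Lip_0(X)\to C(\beta\widetilde{X})$ is an isometric embedding, the functional $\Phi f\mapsto\duality{\dual{x}_i,f}$ is well-defined on the subspace $\Phi(\Lip_0(X))$ and has norm $\norm{\dual{x}_i}$; a Hahn--Banach extension followed by the Riesz representation theorem produces $\mu_i\in M(\beta\widetilde{X})$ with $\norm{\mu_i}=\norm{\dual{x}_i}$ and $\dual{\Phi}\mu_i=\dual{x}_i$. The convexity chain $1=\norm{u_{pq}}\leq\lambda\norm{\dual{x}_1}+(1-\lambda)\norm{\dual{x}_2}\leq 1$ forces $\norm{\dual{x}_1}=\norm{\dual{x}_2}=1$, hence $\norm{\mu_i}=1$.

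Next I would work with the averaged measure $\nu:=\lambda\mu_1+(1-\lambda)\mu_2\in\ball{M(\beta\widetilde{X})}$, which satisfies $\dual{\Phi}\nu=u_{pq}$ and $\norm{\nu}=1$ (the lower bound coming from $\norm{\dual{\Phi}\nu}=\norm{u_{pq}}=1$). Given any $f\in\Lip_0(X)$ with $L(f)=1$ that attains its norm at $(p,q)$, after possibly replacing $f$ by $-f$ one may assume $\Phi f(p,q)=1$, and then
\begin{equation*}
1=\duality{u_{pq},f}=\int_{\beta\widetilde{X}}\Phi f\,d\nu=\int\Phi f\,d\nu^+-\int\Phi f\,d\nu^-\leq\nu^+(\beta\widetilde{X})+\nu^-(\beta\widetilde{X})=\norm{\nu}=1,
\end{equation*}
using $\abs{\Phi f}\leq 1$ on $\beta\widetilde{X}$. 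The collapse of this chain to equalities forces $\Phi f=1$ on the support of $\nu^+$ and $\Phi f=-1$ on the support of $\nu^-$; so $\abs{\Phi f}=L(f)$ at every point of the support of $\nu$, and by arbitrariness of $f$ this support lies in $\setdpq{(p,q)}$.

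Finally, to transfer the support containment from $\nu$ to $\mu_1$ and $\mu_2$ individually, I would invoke subadditivity of total variation: $\abs{\nu}\leq\lambda\abs{\mu_1}+(1-\lambda)\abs{\mu_2}$ as positive Borel measures on $\beta\widetilde{X}$. Since both sides integrate to $1$, they must coincide, and hence any open set of $\abs{\nu}$-measure zero also has $\abs{\mu_i}$-measure zero. Thus the support of each $\mu_i$ is contained in the support of $\nu$, and therefore in $\setdpq{(p,q)}$. I expect the main obstacle to be the equality-case analysis in the Jordan-decomposition step, which requires carefully exploiting both $\norm{\nu}=1$ and the norm bound on $\Phi f$; the Hahn--Banach lifting and the variation-identity extraction at the end are then comparatively routine.
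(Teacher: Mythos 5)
Your proof is correct, and it reaches the conclusion by a somewhat different route than the paper, although both begin with the same lifting: the paper's preliminary discussion already records (via Hahn--Banach plus Riesz representation, exactly as you argue) that every $\dual{x}\in\dual{\Lip_0(X)}$ equals $\dual{\Phi}\mu$ for some $\mu\in M(\beta\widetilde{X})$ of the same norm. The difference lies in how the support restriction is extracted. The paper first uses the collapse of the convexity chain to get $\duality{\mu_i,\Phi f}=1$ for \emph{each} $i$ and each normalized $f$ attaining its norm at $(p,q)$; then, for every $\zeta\notin\setdpq{(p,q)}$, it chooses a witness $f$ with $\abs{\Phi f}\leq c<1$ on a neighborhood $U(\zeta)$, forces $\abs{\mu_i}(U(\zeta))=0$, and finishes with a compactness-and-regularity covering of $\beta\widetilde{X}\setminus\setdpq{(p,q)}$. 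You instead work with the averaged measure $\nu=\lambda\mu_1+(1-\lambda)\mu_2$, for which $\duality{\nu,\Phi f}=1$ needs no equality analysis, run the equality case of $\int\Phi f\,d\nu\leq\norm{\nu}$ through the Jordan decomposition to conclude $\abs{\Phi f}=1$ on the support of $\abs{\nu}$, and only at the end transfer to $\mu_1,\mu_2$ via the identity $\abs{\nu}=\lambda\abs{\mu_1}+(1-\lambda)\abs{\mu_2}$ --- which is where your bookkeeping $\norm{\mu_i}=\norm{\dual{x}_i}=1$ is genuinely needed (the paper's per-measure argument only ever uses $\norm{\mu_i}\leq 1$). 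Your version buys a clean global equality-case argument with no point-by-point choice of witness functions; the paper's buys a direct statement about each $\mu_i$ without the Jordan/variation machinery. One small point you should make explicit: your final passage from ``the support of $\mu_i$ is contained in the closed set $\setdpq{(p,q)}$'' to ``$\mu_i$ is concentrated on $\setdpq{(p,q)}$'' uses the standard fact that a regular Borel measure on the compact space $\beta\widetilde{X}$ vanishes off its support; this inner-regularity-plus-compactness step is exactly the covering argument the paper spells out at the end of its proof, so it should be stated or cited rather than left implicit.
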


\begin{proof}
Take measures $\mu_i\in\ball{M(\beta\widetilde{X})}$ such that $\Phi^{\ast}\mu_i=\dual{x}_i$ for $i=1,2$. Notice that, for any $f\in\ball{\Lip_0(X)}$ such that $\Phi f(p,q)=1$, the inequalities
\begin{align*}
1=\Phi f(p,q)=\duality{u_{pq},f} &= \lambda\duality{\dual{x}_1,f}+(1-\lambda)\duality{\dual{x}_2,f} \\
&= \lambda\duality{\mu_1,\Phi f}+(1-\lambda)\duality{\mu_2,\Phi f} \\
&\leq \lambda\norm{\mu_1}\norm{\Phi f}_{\infty}+(1-\lambda)\norm{\mu_2}\norm{\Phi f}_{\infty}\leq 1
\end{align*}
hold and so we must have $\duality{\mu_1,\Phi f}=\duality{\mu_2,\Phi f}=1$. Now fix $\mu\in\set{\mu_1,\mu_2}$; we will show that $\mu$ is concentrated on $\setdpq{(p,q)}$.

Let $\zeta\in\beta\widetilde{X}\setminus\setdpq{(p,q)}$. Then there is $f\in\Lip_0(X)$ that attains its norm at $(p,q)$ but not at $\zeta$. We may assume that $\Phi f(p,q)=L(f)=1$ and $\abs{\Phi f(\zeta)}<1$. Since $\Phi f$ is continuous, there are $c\in (0,1)$ and an open neighborhood $U(\zeta)\subset\beta\widetilde{X}$ of $\zeta$ such that $\abs{\Phi f(\zeta^\prime)}\leq c$ for every $\zeta^\prime\in U(\zeta)$. But then
\begin{align*}
1&=\int_{\beta\widetilde{X}}(\Phi f)\,d\mu=\int_{U(\zeta)}(\Phi f)\,d\mu+\int_{\beta\widetilde{X}\setminus U(\zeta)}(\Phi f)\,d\mu \\
&\leq c\abs{\mu}(U(\zeta))+\abs{\mu}(\beta\widetilde{X}\setminus U(\zeta))\leq 1-(1-c)\abs{\mu}(U(\zeta))
\end{align*}
where $\abs{\mu}$ is the total variation of $\mu$. Since $c<1$ we obtain $\abs{\mu}(U(\zeta))=0$.

Now let $K$ be any compact subset of $\beta\widetilde{X}\setminus\setdpq{(p,q)}$. Then $\set{U(\zeta):\zeta\in K}$ is an open cover of $K$ so it admits a finite subcover $K\subset\bigcup_{j=1}^n U(\zeta_j)$, hence
\begin{equation*}
\abs{\mu}(K)\leq\sum_{j=1}^n\abs{\mu}(U(\zeta_j))=0 \text{.}
\end{equation*}
Since $\abs{\mu}$ is regular and $\beta\widetilde{X}\setminus\setdpq{(p,q)}$ is open, $\abs{\mu}(\beta\widetilde{X}\setminus\setdpq{(p,q)})$ is the supremum of such $\abs{\mu}(K)$, which implies that it is equal to zero. It follows that $\mu$ is concentrated on $\setdpq{(p,q)}$.
\end{proof}

As a consequence, the peaking function in Proposition \ref{pr:weaver_peaking} can be replaced by a family of norm attaining functions $f$ such that the regions where $\abs{\Phi f}<L(f)$ cover all of $\beta\widetilde{X}$ except for $(p,q)$ and $(q,p)$. This is equivalent to saying that $\setdpq{(p,q)}=\set{(p,q),(q,p)}$.

\begin{proposition}
\label{pr:local_peaking}
Let $X$ be a pointed metric space and $(p,q)\in\widetilde{X}$ such that, for any $\zeta\in\beta\widetilde{X}\setminus\set{(p,q),(q,p)}$, there is $f\in\Lip_0(X)$ such that $\Phi f(p,q)=L(f)$ and $\abs{\Phi f(\zeta)}<L(f)$. Then $u_{pq}$ is a preserved extreme point of $\ball{\lipfree{X}}$.
\end{proposition}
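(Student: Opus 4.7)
The plan is to combine the hypothesis with Lemma \ref{lm:measure_support} to reduce an arbitrary convex decomposition of $u_{pq}$ in $\ball{\dual{\Lip_0(X)}}$ to a problem about measures supported on a two-point set, and then identify those measures explicitly.

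Suppose $u_{pq}=\lambda\dual{x}_1+(1-\lambda)\dual{x}_2$ for some $\lambda\in(0,1)$ and $\dual{x}_1,\dual{x}_2\in\ball{\dual{\Lip_0(X)}}$. By Lemma \ref{lm:measure_support}, there exist $\mu_1,\mu_2\in\ball{M(\beta\widetilde{X})}$ concentrated on $\setdpq{(p,q)}$ with $\dual{\Phi}\mu_i=\dual{x}_i$. The hypothesis of the proposition is precisely the statement that $\setdpq{(p,q)}=\set{(p,q),(q,p)}$: for any $\zeta\in\beta\widetilde{X}\setminus\set{(p,q),(q,p)}$, the given $f$ attains its norm at $(p,q)$ but not at $\zeta$, so $\zeta\notin\setdpq{(p,q)}$. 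Therefore each $\mu_i$ is supported on the two-point set $\set{(p,q),(q,p)}$, which forces $\mu_i=\alpha_i\delta_{(p,q)}+\beta_i\delta_{(q,p)}$ for some scalars $\alpha_i,\beta_i$.

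Next I compute $\dual{\Phi}\delta_{(p,q)}$ and $\dual{\Phi}\delta_{(q,p)}$ directly from the definition of $\Phi$: for every $f\in\Lip_0(X)$,
\begin{equation*}
\duality{\dual{\Phi}\delta_{(p,q)},f}=\Phi f(p,q)=\frac{f(p)-f(q)}{d(p,q)}=\duality{u_{pq},f},
\end{equation*}
so $\dual{\Phi}\delta_{(p,q)}=u_{pq}$, and analogously $\dual{\Phi}\delta_{(q,p)}=-u_{pq}$. Consequently $\dual{x}_i=(\alpha_i-\beta_i)u_{pq}$, and since $\norm{u_{pq}}=1$ and $\norm{\dual{x}_i}\leq 1$ we have $\abs{\alpha_i-\beta_i}\leq 1$.

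Finally, substituting into the decomposition gives
\begin{equation*}
u_{pq}=\bigl[\lambda(\alpha_1-\beta_1)+(1-\lambda)(\alpha_2-\beta_2)\bigr]u_{pq},
\end{equation*}
so the bracketed convex combination equals $1$. Combined with $\abs{\alpha_i-\beta_i}\leq 1$ and $\lambda\in(0,1)$, this forces $\alpha_i-\beta_i=1$ for $i=1,2$, hence $\dual{x}_1=\dual{x}_2=u_{pq}$. Thus $u_{pq}$ is a preserved extreme point. The only non-routine step is invoking Lemma \ref{lm:measure_support}; once the measures are pinned down to a two-point support, the rest is a direct computation, so no real obstacle is expected.
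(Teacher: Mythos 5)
Your proposal is correct and follows essentially the same route as the paper: invoke Lemma \ref{lm:measure_support} to get measures concentrated on $\setdpq{(p,q)}=\set{(p,q),(q,p)}$, observe $\dual{\Phi}\delta_{(p,q)}=u_{pq}$ and $\dual{\Phi}\delta_{(q,p)}=-u_{pq}$, and conclude $\dual{x}_1=\dual{x}_2=u_{pq}$. You merely spell out the final coefficient argument ($\abs{\alpha_i-\beta_i}\leq 1$ and the convex combination equal to $1$) that the paper leaves implicit.
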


\begin{proof}
Suppose that $u_{pq}=\lambda\dual{x}_1+(1-\lambda)\dual{x}_2$ for some $\lambda\in(0,1)$ and $\dual{x}_1,\dual{x}_2\in\ball{\dual{\Lip_0(X)}}$. By Lemma \ref{lm:measure_support}, for $i=1,2$ we have $\dual{x}_i=\dual{\Phi}\mu_i$ where $\mu_i\in\ball{M(\beta\widetilde{X})}$ is concentrated on $\setdpq{(p,q)}=\set{(p,q),(q,p)}$. But the Dirac measure $\delta_{(p,q)}$ on $(p,q)$ satisfies
\begin{equation*}
\duality{\Phi^{\ast}\delta_{(p,q)},f}=\Phi f(p,q)=\duality{u_{pq},f}
\end{equation*}
for any $f\in\Lip_0(X)$, so $\Phi^{\ast}\delta_{(p,q)}=u_{pq}$. For $i=1,2$, $\dual{x}_i$ is therefore a linear combination of $u_{pq}$ and $u_{qp}=-u_{pq}$ and it follows that $\dual{x}_i=u_{pq}$. Hence $u_{pq}\in\Ext(\ball{\dual{\Lip_0(X)}})$ as was to be shown.
\end{proof}

Next, we show that the elements of $\setdpq{(p,q)}$ have a very specific form when $p$ and $q$ satisfy the condition in Lemma \ref{lm:pq_concave}. Let $p\in X$ and $\zeta\in\beta\widetilde{X}$. Following \cite[Section 4.7]{weaver}, we say that $\zeta$ \emph{lies over $p$} if it is the limit of a net $\set{(x_i,y_i):i\in I}$ in $\widetilde{X}$ such that $\lim_i x_i=\lim_i y_i=p$. Notice that if $p$ is an isolated point in $X$ then no point of $\beta\widetilde{X}$ can lie over $p$. Notice also that if $X$ is compact, then each point of $\beta\widetilde{X}$ either belongs to $\widetilde{X}$ or lies over some $p\in X$.

\begin{proposition}
\label{pr:dpq_concave}
Let $X$ be a pointed metric space and $p,q$ distinct points of $X$. Suppose that no point of $\beta X$ lies strictly between $p$ and $q$. Then $\setdpq{(p,q)}=\set{(p,q),(q,p)}\cup A_p\cup A_q$ where $A_p$ (resp. $A_q$) consists of points of $\beta\widetilde{X}$ that lie over $p$ (resp. $q$).
\end{proposition}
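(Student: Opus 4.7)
The plan is to argue the contrapositive: given $\zeta \in \beta\widetilde{X}$ with $\zeta \notin \set{(p,q),(q,p)}$ that lies over neither $p$ nor $q$, we exhibit $f \in \Lip_0(X)$ attaining its norm at $(p,q)$ while $|\Phi f(\zeta)| < L(f)$, showing $\zeta \notin \setdpq{(p,q)}$. Fix a net $(x_i,y_i) \to \zeta$ in $\widetilde{X}$ and let $\xi_1, \xi_2 \in \beta X$ be the images of $\zeta$ under the continuous extensions of the two coordinate projections. Each of the four configurations with $\set{\xi_1, \xi_2} \subseteq \set{p,q}$ is excluded: the diagonal ones by the hypothesis that $\zeta$ does not lie over $p$ or $q$, and the off-diagonal ones because $x_i \to p$, $y_i \to q$ in $\beta X$ implies $(x_i,y_i) \to (p,q)$ in $X^2$, so by uniqueness of limits in the dense embedding $\widetilde{X} \hookrightarrow \beta\widetilde{X}$ we would get $\zeta = (p,q)$ (and symmetrically for $(q,p)$), contradicting our hypothesis. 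Hence at least one of $\xi_1, \xi_2$ lies outside $\set{p,q}$, and using the symmetry $u_{qp} = -u_{pq}$ we may assume $\xi_1 \notin \set{p,q}$. By the standing hypothesis and Lemma \ref{lm:pq_concave}, $\metex{\xi_1; p, q} > 0$.

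Assume first that $d(p, \xi_1)$ and $d(q, \xi_1)$ are both finite. The open interval $(d(p,q) - d(q, \xi_1),\, d(p, \xi_1))$ has length $\metex{\xi_1; p, q}$ and is therefore nonempty; fix $c$ in it and $\eta > 0$ small enough that $c + \eta < d(p, \xi_1)$ and $d(p,q) - c + \eta < d(q, \xi_1)$. Put $A := \set{x \in X : d(p, x) > c + \eta \text{ and } d(q, x) > d(p,q) - c + \eta}$; since $\xi_1$ satisfies both inequalities strictly, $x_i \in A$ eventually. Define $g : \set{p,q} \cup A \to \mathbb{R}$ by $g(p) = 0$, $g(q) = d(p,q)$, and $g \equiv c$ on $A$. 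A direct check using the defining inequalities of $A$ shows that $g$ is $1$-Lipschitz on its domain, so by Proposition \ref{pr:metric_tietze} it extends to a $1$-Lipschitz $f$ on $X$, which (after an irrelevant translation) lies in $\Lip_0(X)$ with $L(f) = 1$ and attains its norm at $(p,q)$.

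Since $f(x_i) = c$ eventually, $\Phi f(\zeta) = \lim_i (c - f(y_i))/d(x_i, y_i)$, and we verify $|\Phi f(\zeta)| < 1$ by cases on $\xi_2$. If $\xi_2 = \xi_1$ then $y_i \in A$ eventually, $f(y_i) = c$, and the limit is $0$. If $\xi_2 \in \set{p,q}$, continuity of $f$ on $X$ gives $f(y_i) \to f(\xi_2) \in \set{0, d(p,q)}$ while $d(x_i, y_i) \to d(p, \xi_1)$ or $d(q, \xi_1)$, so the limit ratio equals $c/d(p, \xi_1)$ or $-(d(p,q) - c)/d(q, \xi_1)$, each strictly less than $1$ in absolute value by the choice of $c$. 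In the remaining subcase $\xi_2 \notin \set{p, q, \xi_1}$ we refine the construction by adjoining a second neighborhood $A_2 \ni \xi_2$ and setting $g \equiv c_2$ on it, with $c_2$ chosen so that $|c - c_2| \leq d(A, A_2)$ (so $g$ remains $1$-Lipschitz across the two neighborhoods) and $|c - c_2| < d(\xi_1, \xi_2)$; this is possible because $\metex{\xi_2; p, q} > 0$ leaves room in the analogous interval for $c_2$, and shrinking the neighborhoods makes $d(A, A_2)$ approach $d(\xi_1, \xi_2)$ from below. The cases where $d(p, \xi_1) = \infty$ or $d(q, \xi_1) = \infty$ are handled separately by the simpler norm-attaining functions $\min(d(p, \cdot), d(p,q))$ and $d(p,q) - \min(d(q, \cdot), d(p,q))$, which become eventually constant on nets escaping from $p$ or $q$. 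The principal obstacle is the subcase $\xi_2 \notin \set{p, q, \xi_1}$, where two constants must be balanced simultaneously against the $1$-Lipschitz constraints from $p$, from $q$, and across the two neighborhoods—a balance made possible precisely by the strict positivity of $\metex{\xi_j; p, q}$ for $j = 1, 2$.
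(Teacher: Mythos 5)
Your overall strategy mirrors the paper's (contrapositive, pass to the coordinate limits $\xi_1,\xi_2\in\beta X$, exclude the configurations inside $\set{p,q}$, then exhibit a norm-attaining function that is ``flat'' near the limits), and the subcases $\xi_2=\xi_1$, $\xi_2\in\set{p,q}$ and the infinite-distance case are essentially sound. But the decisive subcase $\xi_2\notin\set{p,q,\xi_1}$ is not actually proved. Your justification hinges on the quantity $d(\xi_1,\xi_2)$ and on the claim that shrinking the neighborhoods makes $d(A,A_2)$ approach it from below. Neither is available: the metric does not extend continuously to $\beta X\times\beta X$, so $d(\xi_1,\xi_2)$ is undefined (only $\lim_i d(x_i,y_i)$, a quantity attached to $\zeta$ rather than to the pair $(\xi_1,\xi_2)$, makes sense), and the claim about $d(A,A_2)$ is false in general: two sequences $\set{a_n},\set{b_n}$ with $d(a_n,b_n)\to 0$ that form a discrete set have distinct limits $\xi_1\neq\xi_2$ along an ultrafilter, yet every pair of neighborhoods has $d(A,A_2)=0$, which forces $c=c_2$. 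More importantly, you never verify that your constraints are simultaneously satisfiable: $c$ and $c_2$ are confined to the intervals imposed by the $1$-Lipschitz conditions against $p$ and $q$, and these intervals can be disjoint (e.g.\ $d(p,\xi_1)$ small and $d(q,\xi_2)$ small), so the compatibility of ``$c,c_2$ in their intervals'' with ``$\abs{c-c_2}\leq d(A,A_2)$'' and with a strict bound against $\lim_i d(x_i,y_i)$ is exactly the content of this case. It can be salvaged, but only via estimates such as $d(x,y)\geq\abs{d(p,x)-d(p,y)}$ and $d(x,y)\geq d(p,q)-d(p,x)-d(q,y)$ relating the gap between the two intervals to the cross-distances, together with positivity of $\metex{\xi_j;p,q}$, and this needs neighborhoods with two-sided control of $d(p,\cdot)$ and $d(q,\cdot)$ (your $A$ carries only lower bounds); none of that bookkeeping appears in your argument.

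This is precisely where the paper avoids plateaus altogether: on $Z=\set{p,q}\cup\bigcup_i\set{x_i,y_i}$ it sets $g(p)=d(p,q)$ and $g(x)=(1-\alpha)d(x,q)$ for all other points, which is automatically $(1-\alpha)$-Lipschitz among all non-$p$ points irrespective of their mutual distances, so only the constraints at $p$ remain; those are handled by the Claim that $\metex{x_i;p,q}/d(x_i,q)$ is bounded below along a subnet, which also absorbs unbounded nets, so no separate infinite-distance case is needed. Two smaller slips in your write-up: for the one-plateau function you must additionally take $c$ inside $(0,d(p,q))$ (always possible, since the interval $(d(p,q)-d(q,\xi_1),d(p,\xi_1))$ meets $(0,d(p,q))$), because the ``direct check'' of $1$-Lipschitzness against $p$ and $q$ fails for $c<0$ or $c>d(p,q)$; and with your normalization $\Phi g(p,q)=-1$, which is harmless only because norm attainment requires $\abs{\Phi f(p,q)}=L(f)$ rather than $\Phi f(p,q)=L(f)$.
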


\begin{proof}
Let $\zeta\in\beta\widetilde{X}$, and let $\set{(x_i,y_i):i\in I}$ be a net in $\widetilde{X}$ that converges to $\zeta$ in $\beta\widetilde{X}$. Choose a subnet such that $\set{x_i}$ and $\set{y_i}$ converge to elements $\xi$ and $\eta$ in $\beta X$; call that subnet $(x_i,y_i)$ again. First we prove the following claim:

\begin{claim}
If $\set{x_i}$ does not converge to $p$ or $q$, then it has a subnet $\set{x_j}$ such that 
\begin{equation}
\lim_j \frac{\metex{x_j;p,q}}{d(x_j,q)}>0 \text{.}
\label{eq:claim_lim}
\end{equation}
\end{claim}

\begin{proof}[Proof of the claim]
Take $\varepsilon>0$ such that $d(x_i,p)>\varepsilon$ and $d(x_i,q)>\varepsilon$ eventually. By Lemma \ref{lm:pq_concave}, there is $\delta>0$ such that $\metex{x_i;p,q}\geq\delta$ eventually. Hence, if $d(x_i,q)$ is eventually bounded by $M<\infty$, then the limit \eqref{eq:claim_lim} is at least $\delta/M>0$. Otherwise, choose a subnet $\set{x_j}$ such that $d(x_j,q)\rightarrow\infty$. Then also $d(x_j,p)\geq d(x_j,q)-d(p,q)\rightarrow\infty$, and
\begin{equation*}
\lim_j \frac{d(x_j,p)}{d(x_j,q)}\leq\lim_j \frac{d(x_j,q)+d(q,p)}{d(x_j,q)}=1+\lim_j\frac{d(p,q)}{d(x_j,q)}=1
\end{equation*}
and by symmetry in $p$ and $q$ we get $\lim_j d(x_j,p)/d(x_j,q)=1$, hence
\begin{equation*}
\lim_j \frac{\metex{x_j;p,q}}{d(x_j,q)}=1+\lim_j\frac{d(x_j,p)-d(p,q)}{d(x_j,q)}=2
\end{equation*}
is positive.
\end{proof}

We need to show that $\zeta\in\setdpq{(p,q)}$ implies that $\xi,\eta\in\set{p,q}$. We will assume otherwise, and construct $f\in\ball{\Lip_0(X)}$ such that $\Phi f(p,q)=1$ and $\abs{\Phi f(\zeta)}<1$.

Suppose first that $\xi,\eta\in\beta X\setminus\set{p,q}$. By the claim, we can replace $\set{(x_i,y_i)}$ with a subnet such that
\begin{equation*}
c=\min\set{1,\inf_i\frac{\metex{x_i;p,q}}{d(x_i,q)}, \inf_i\frac{\metex{y_i;p,q}}{d(y_i,q)}}>0 \text{.}
\end{equation*}
Let $Z=\set{p,q}\cup\bigcup_{i\in I}\set{x_i,y_i}$, choose $\alpha\in (0,c)$ and define $g\colon Z\rightarrow\mathbb{R}$ by
\begin{equation*}
g(x)=\begin{cases}
d(p,q) & \text{if } x=p \\
(1-\alpha)\cdot d(x,q) & \text{if } x\in Z\setminus\set{p} \text{.}
\end{cases}
\end{equation*}
It is clear that $\Phi g(p,q)=1$ and $\abs{\Phi g(x,y)}\leq 1-\alpha$ for $x,y\in Z\setminus\set{p}$. For any $x\in Z\setminus\set{p,q}$ we have
\begin{align*}
1-\Phi g(p,x) &= \frac{\metex{x;p,q}-\alpha d(x,q)}{d(p,x)} \geq (c-\alpha)\frac{d(x,q)}{d(p,x)}>0 \\
1+\Phi g(p,x) &= \frac{\metex{p;x,q}+\alpha d(x,q)}{d(p,x)} \geq \alpha\,\frac{d(x,q)}{d(p,x)}>0
\end{align*}
so $-1<\Phi g(p,x)<1$, hence $L(g)=1$. Now extend $g$ from $Z$ to $X$ using Proposition \ref{pr:metric_tietze} and let $f=g-g(e)$. Then $f\in\ball{\Lip_0(X)}$, $\Phi f(p,q)=1$, and $\abs{\Phi f(\zeta)}=\lim_i\abs{\Phi f(x_i,y_i)}\leq 1-\alpha<1$, hence $\zeta\notin\setdpq{(p,q)}$.

Now suppose that exactly one of $\xi,\eta$ is in $\set{p,q}$; without loss of generality, assume that $\eta=q$. Then we can repeat the construction above with
\begin{equation*}
c=\min\set{1,\inf_i\frac{\metex{x_i;p,q}}{d(x_i,q)}}>0
\end{equation*}
and $Z=\set{p,q}\cup\bigcup_{i\in I}\set{x_i}$. Again we obtain $f\in\ball{\Lip_0(X)}$ such that $\Phi f(p,q)=1$, and $\abs{\Phi f(\zeta)}=\lim_i\abs{\Phi f(x_i,q)}\leq 1-\alpha<1$ so that $\zeta\notin\setdpq{(p,q)}$. This concludes the proof.
\end{proof}

If $A_p$ and $A_q$ are empty, we can apply Proposition \ref{pr:local_peaking} to conclude that $u_{pq}$ is a preserved extreme point of $\ball{\lipfree{X}}$. However this is not generally the case. The following technical lemma will be used in Example \ref{ex:ap_aq} to build compact spaces that have no triple of metrically aligned points and yet one or both of $A_p$, $A_q$ are nonempty; they will show that the condition in Proposition \ref{pr:local_peaking} is sufficient but not necessary for preserved extremality.

\begin{lemma}
\label{lm:dpq_nonempty}
Let $X$ be a pointed metric space and $p,q$ distinct points of $X$. Suppose that there is a sequence $\set{q_n}$ in $X\setminus\set{q}$ such that $q_n\rightarrow q$ and $\metex{q_n;p,q}/d(q_n,q)\rightarrow 0$. Then $\setdpq{(p,q)}$ contains a point that lies over $q$.
\end{lemma}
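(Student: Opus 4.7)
The plan is to let $\zeta$ be any cluster point in the compact space $\beta\widetilde{X}$ of the sequence $\set{(q_n,q)}_{n\in\mathbb{N}}\subset\widetilde{X}$, and realize it as the limit of some subnet $(q_{n(i)},q)_{i\in I}$. Since a subnet of a sequence eventually exceeds every fixed integer, and $q_n\to q$ in $X$, the first coordinate $q_{n(i)}$ still converges to $q$ in $X$; the second coordinate does so trivially. Thus $\zeta$ lies over $q$, which gives the qualitative part of the conclusion.

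The substantive task is to check that $\zeta\in\setdpq{(p,q)}$. Pick any $f\in\Lip_0(X)$ attaining its norm at $(p,q)$. After replacing $f$ by $\pm f/L(f)$ we may assume $L(f)=1$ and $\Phi f(p,q)=1$, so $f(p)-f(q)=d(p,q)$. Since $\Phi f(\zeta)=\lim_i\Phi f(q_{n(i)},q)$ and $\abs{\Phi f}\leq L(f)=1$ everywhere, it suffices to prove that $\Phi f(q_n,q)\to 1$ along the whole sequence.

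The estimate is a direct consequence of Lipschitz arithmetic. Setting $b_n:=f(q_n)-f(q)$, one has $f(p)-f(q_n)=d(p,q)-b_n$, so the inequality $\abs{f(p)-f(q_n)}\leq d(p,q_n)$ combined with the identity $d(p,q_n)+d(q_n,q)=d(p,q)+\metex{q_n;p,q}$ yields
\[
b_n\geq d(p,q)-d(p,q_n)=d(q_n,q)-\metex{q_n;p,q}.
\]
Together with the trivial bound $b_n\leq d(q_n,q)$, this forces
\[
1-\frac{\metex{q_n;p,q}}{d(q_n,q)}\leq \Phi f(q_n,q)\leq 1,
\]
and the hypothesis $\metex{q_n;p,q}/d(q_n,q)\to 0$ gives $\Phi f(q_n,q)\to 1=L(f)$, as required.

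I do not anticipate any genuine obstacle. The only care needed is the bookkeeping around subnets of sequences in $\beta\widetilde{X}$, to make sure the chosen $\zeta$ truly lies over $q$ and not merely over some point of $\beta X\setminus\set{p,q}$; once that is set up, the norm-attainment at $\zeta$ follows from a one-line rewriting of the triangle defect $\varepsilon(q_n;p,q)$.
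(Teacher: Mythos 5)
Your proof is correct, and it takes a genuinely simpler route than the paper's. You test membership in $\setdpq{(p,q)}$ against a cluster point $\zeta$ of the pairs $(q_n,q)$, whereas the paper works with the pairs $(q_n,q_{n+1})$, which forces it first to pass to a subsequence with $d(q_{n+1},q)/d(q_n,q)\to 0$ and then to compare $\Phi f$ with the auxiliary function $h=d(\cdot,q)$; the extra inequality $f(q_{n+1})\le f(q)+h(q_{n+1})$ is needed there precisely because the second coordinate is not $q$. By freezing the second coordinate at $q$ you only need the single estimate $f(q_n)\ge f(p)-d(p,q_n)=f(q)+d(q_n,q)-\metex{q_n;p,q}$ --- the same key inequality the paper extracts from $\Phi f(p,q_n)\le 1$ --- and the sandwich $1-\metex{q_n;p,q}/d(q_n,q)\le\Phi f(q_n,q)\le 1$ yields $\Phi f(\zeta)=1=L(f)$ for every $f$ normalized so that $\Phi f(p,q)=L(f)=1$ (the normalization by $\pm f/L(f)$ is harmless since norm attainment at both $(p,q)$ and $\zeta$ is invariant under it). Your bookkeeping is also fine: any subnet of $\set{(q_n,q)}$ has first coordinates forming a subnet of $\set{q_n}$, hence still converging to $q$, so $\zeta$ lies over $q$ in the sense of the paper, whose definition does not require both coordinates of the approximating pairs to differ from $q$; and continuity of the extension of $\Phi f$ to $\beta\widetilde{X}$ transfers the limit $1$ to $\Phi f(\zeta)$. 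Since the lemma is only invoked, in Example \ref{ex:ap_aq}, to produce a point of $\setdpq{(p,q)}$ beyond $(p,q)$ and $(q,p)$, nothing used later is lost by your choice of witness net, and you save both the subsequence extraction and the auxiliary function.
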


\begin{proof}
Since $d(q_n,q)\rightarrow 0$, we may assume that $d(q_n,q)$ is strictly decreasing and that $d(q_{n+1},q)/d(q_n,q)\rightarrow 0$ by selecting a subsequence. Then $\zeta_n=(q_n,q_{n+1})\in\widetilde{X}$ for every $n\in\mathbb{N}$ and, since $\beta\widetilde{X}$ is compact, the sequence $\set{\zeta_n}$ must have a subnet that converges to some $\zeta\in\beta\widetilde{X}$. Clearly $\zeta$ lies over $q$. We will show that $\zeta\in\setdpq{(p,q)}$.

Define $h\in\ball{\Lip_0(X)}$ by $h(x)=d(x,q)$ for $x\in X$. Then
\begin{equation*}
\Phi h(\zeta_n) = \frac{d(q_n,q)-d(q_{n+1},q)}{d(q_n,q_{n+1})} \geq \frac{d(q_n,q)-d(q_{n+1},q)}{d(q_n,q)+d(q_{n+1},q)} = \frac{1-\frac{d(q_{n+1},q)}{d(q_n,q)}}{1+\frac{d(q_{n+1},q)}{d(q_n,q)}}
\end{equation*}
and since $\Phi h\leq 1$ we obtain $\Phi h(\zeta_n)\rightarrow 1$ and thus $\Phi h(\zeta)=1$. Now let $f\in\ball{\Lip_0(X)}$ be such that $\Phi f(p,q)=1$. From $\Phi f(q_{n+1},q)\leq 1$ we obtain $f(q_{n+1})\leq f(q)+h(q_{n+1})$, and from $\Phi f(p,q_n)\leq 1$ we get
\begin{equation*}
f(q_n)\geq f(p)-d(p,q_n)=f(q)+h(q_n)-\metex{q_n;p,q} \text{.}
\end{equation*}
Subtracting both inequalities yields $f(q_n)-f(q_{n+1}) \geq h(q_n)-h(q_{n+1})-\metex{q_n;p,q}$, hence
\begin{equation*}
\Phi f(\zeta_n) \geq \Phi h(\zeta_n) - \frac{\metex{q_n;p,q}}{d(q_n,q_{n+1})} \geq \Phi h(\zeta_n) - \frac{\metex{q_n;p,q}}{d(q_n,q)}\frac{1}{1-\frac{d(q_{n+1},q)}{d(q_n,q)}}
\end{equation*}
and taking limits we get $\Phi f(\zeta)\geq \Phi h(\zeta)$, hence $\Phi f(\zeta)=1$.
\end{proof}

\begin{example}
\label{ex:ap_aq}
In $\mathbb{R}^2$, choose distinct points $p$ and $q$ at unit distance, and $\lambda\in(0,1)$. We construct sequences $\set{p_n}$ and $\set{q_n}$ iteratively as follows: let $q_0=p$ and $p_0=q$. Suppose that $p_0,\ldots,p_{n-1}$ and $q_0,\ldots,q_{n-1}$ have been chosen. Then take $p_n$ in the ball with center $p+\lambda^n(q-p)$ and radius $\lambda^{2n}$, such that $p_n$ is not aligned with any pair of points in $\set{p_0,\ldots,p_{n-1},q_0,\ldots,q_{n-1}}$; this is always possible because the ball has positive measure while the set of lines spanned by a finite amount of pairs of points is a null set. Similarly, take $q_n$ in the ball with center $q+\lambda^n(p-q)$ and radius $\lambda^{2n}$ but not aligned with any pair of points in $\set{p_0,\ldots,p_{n-1},p_n,q_0,\ldots,q_{n-1}}$.

The space $X=\set{p,q,p_1,p_2,\ldots,q_1,q_2,\ldots}$ is compact and has no triple of aligned points. Hence, $u_{pq}$ is a preserved extreme point of $\ball{\lipfree{X}}$ as we will prove in Theorem \ref{tm:pep_fk}. However, $\lambda^n-\lambda^{2n}<d(p_n,p)<\lambda^n+\lambda^{2n}$ and $\metex{p_n;p,q}\leq 2\dist(p_n,[p,q])<2\lambda^{2n}$, so it is simple to check that the hypotheses of Lemma \ref{lm:dpq_nonempty} are satisfied and this yields an element of $\setdpq{(p,q)}$ that lies over $p$. Similarly, the sequence $\set{q_n}$ yields an element of $\setdpq{(p,q)}$ that lies over $q$.

By removing e.g. the points $p_n$ for $n\geq 1$ from $X$, we obtain a similar example where $A_p$ is empty because $p$ is then isolated.
\end{example}

\section{Characterization of preserved extreme points}

We are finally ready to prove the characterization theorem for preserved extreme points of $\ball{\lipfree{X}}$:

\begin{theorem}
\label{tm:pep_fx}
Let $X$ be a pointed metric space, and let $p,q$ be distinct points of $X$. Then the following are equivalent:
\begin{enumerate}[\upshape (i)]
\item $u_{pq}$ is a preserved extreme point of $\ball{\lipfree{X}}$,
\item no point of $\beta X$ lies strictly between $p$ and $q$,
\item for every $\varepsilon>0$ there is $\delta>0$ such that $\metex{r;p,q}\geq\delta$ whenever $r\in X$ satisfies $d(p,r)\geq\varepsilon$ and $d(q,r)\geq\varepsilon$.
\end{enumerate}
\end{theorem}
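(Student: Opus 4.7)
The plan is to establish (i) $\Rightarrow$ (ii) $\Leftrightarrow$ (iii) $\Rightarrow$ (i). The first implication is Proposition \ref{pr:aligned_combination}(b) and the middle equivalence is Lemma \ref{lm:pq_concave}; the substance of the proof lies in the implication (ii) $\Rightarrow$ (i).

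To prove (ii) $\Rightarrow$ (i), I would take any convex decomposition $u_{pq}=\lambda\dual{x}_1+(1-\lambda)\dual{x}_2$ with $\lambda\in(0,1)$ and $\dual{x}_i\in\ball{\dual{\Lip_0(X)}}$, and show $\dual{x}_i=u_{pq}$. By Lemma \ref{lm:measure_support}, write $\dual{x}_i=\dual{\Phi}\mu_i$ with $\mu_i\in\ball{M(\beta\widetilde{X})}$ concentrated on $\setdpq{(p,q)}$. Under (ii), Proposition \ref{pr:dpq_concave} gives $\setdpq{(p,q)}\subseteq\{(p,q),(q,p)\}\cup A_p\cup A_q$; since any $\xi\in A_p$ is a limit of pairs $(x_j,y_j)$ with $x_j,y_j\to p$ and $x_j\neq y_j$, the set $A_p\cup A_q$ is disjoint from $\widetilde{X}$, and so we may decompose
\[
\mu_i=a_i\delta_{(p,q)}+b_i\delta_{(q,p)}+\sigma_i,
\]
where $\sigma_i$ is concentrated on $A_p\cup A_q$ and the total variation splits as $\|\mu_i\|=|a_i|+|b_i|+\|\sigma_i\|\leq 1$.

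The main step is to construct test functions that annihilate the $\sigma_i$ contribution. For each $\epsilon\in(0,d(p,q)/2)$, define $h$ on $\overline{B}(p,\epsilon)\cup\overline{B}(q,\epsilon)$ by $h\equiv 1$ on the first ball and $h\equiv 0$ on the second; since $d(x,y)\geq d(p,q)-2\epsilon$ whenever $x$ and $y$ lie in different balls, $L(h)\leq 1/(d(p,q)-2\epsilon)$ on its domain. By Proposition \ref{pr:metric_tietze}, $h$ extends to $X$ with the same Lipschitz constant, and $g_\epsilon:=h-h(e)\in\Lip_0(X)$ satisfies $\Phi g_\epsilon(p,q)=1/d(p,q)$ and $L(g_\epsilon)\leq 1/(d(p,q)-2\epsilon)$. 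Because $g_\epsilon$ is locally constant near $p$ and near $q$, $\Phi g_\epsilon$ vanishes identically on $A_p\cup A_q$, so
\[
\duality{\dual{x}_i,g_\epsilon}=\int \Phi g_\epsilon\, d\mu_i=(a_i-b_i)/d(p,q).
\]
Combining with $|\duality{\dual{x}_i,g_\epsilon}|\leq L(g_\epsilon)$ and letting $\epsilon\to 0^+$ yields $|a_i-b_i|\leq 1$, while pairing the convex decomposition against $g_\epsilon$ gives $\lambda(a_1-b_1)+(1-\lambda)(a_2-b_2)=1$. Since $\lambda\in(0,1)$, these two facts force $a_i-b_i=1$ for each $i$.

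From $|a_i|+|b_i|\geq|a_i-b_i|=1$ and $\|\mu_i\|\leq 1$ we conclude $\sigma_i=0$, hence $\dual{x}_i=\dual{\Phi}\mu_i=(a_i-b_i)u_{pq}=u_{pq}$, as required. The main obstacle I expect is precisely the test-function construction: one needs elements of $\Lip_0(X)$ that are simultaneously locally constant near both $p$ and $q$ (so as to kill the derivation-like functionals supported on $A_p\cup A_q$, which need not be proportional to $u_{pq}$ in any pointwise sense — as Example \ref{ex:ap_aq} shows) and whose Lipschitz constants approach $1/d(p,q)$. The metric Tietze theorem delivers extensions with no loss in Lipschitz constant, and the sharp ratio $d(p,q)/(d(p,q)-2\epsilon)\to 1$ is exactly what is needed to pin down $|a_i-b_i|\leq 1$.
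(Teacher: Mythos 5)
Your proof is correct and follows essentially the same route as the paper: the easy implications via Lemma \ref{lm:pq_concave} and Proposition \ref{pr:aligned_combination}(b), then Lemma \ref{lm:measure_support} and Proposition \ref{pr:dpq_concave}, the decomposition $\mu_i=a_i\delta_{(p,q)}+b_i\delta_{(q,p)}+\sigma_i$, and a test function locally constant near $p$ and near $q$ to kill the part supported on $A_p\cup A_q$. The only (harmless) difference is your $\epsilon\to 0$ limiting argument to get $\abs{a_i-b_i}\leq 1$, which the paper does not need since $\abs{a_i-b_i}\leq\abs{a_i}+\abs{b_i}\leq\norm{\mu_i}\leq 1$ holds directly.
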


\begin{proof}
The equivalence (ii)$\Leftrightarrow$(iii) is Lemma \ref{lm:pq_concave}, and the implication (i)$\Rightarrow$(ii) is Proposition \ref{pr:aligned_combination}(b). Only (ii)$\Rightarrow$(i) remains to be proved. Assume (ii), then Proposition \ref{pr:dpq_concave} implies that $\setdpq{(p,q)}=\set{(p,q),(q,p)}\cup A_p \cup A_q$ where all elements of $A_p$ and $A_q$ lie over $p$ and $q$, respectively.

Suppose that $u_{pq}=\lambda\dual{x}_1+(1-\lambda)\dual{x}_2$ for some $\lambda\in(0,1)$ and $\dual{x}_1,\dual{x}_2\in\ball{\dual{\Lip_0(X)}}$. By Lemma \ref{lm:measure_support}, $\dual{x}_1=\dual{\Phi}\mu_1$ and $\dual{x}_2=\dual{\Phi}\mu_2$ where $\mu_1,\mu_2\in\ball{M(\beta\widetilde{X})}$ are concentrated on $\setdpq{(p,q)}$. Hence, for $i=1,2$ we can write $\mu_i=a_i\delta_{(p,q)}+b_i\delta_{(q,p)}+\mu_i^{\prime}$ where $\mu_i^{\prime}$ is concentrated on $A_p\cup A_q$, so that $\dual{x}_i=(a_i-b_i)u_{pq}+\dual{\Phi}\mu_i^{\prime}$. Then, for any $f\in\Lip_0(X)$ we have
\begin{align}
\duality{u_{pq},f} &= \lambda\duality{(a_1-b_1)u_{pq}+\dual{\Phi}\mu_1^{\prime},f}+(1-\lambda)\duality{(a_2-b_2)u_{pq}+\dual{\Phi}\mu_2^{\prime},f} \notag \\
&= \big(\lambda(a_1-b_1)+(1-\lambda)(a_2-b_2)\big)\duality{u_{pq},f} \notag \\
&\quad + \lambda\int_{A_p\cup A_q}(\Phi f)\,d\mu_1^\prime + (1-\lambda)\int_{A_p\cup A_q}(\Phi f)\,d\mu_2^\prime \text{.} \label{eq:main_tm_eq}
\end{align}

Let $U$ and $V$ be neighborhoods of $p$ and $q$ with disjoint closures, and define $g\in\Lip(U\cup V)$ by $g=d(p,q)$ in $U$ and $g=0$ in $V$. Extend $g$ to all of $X$ using Proposition \ref{pr:metric_tietze}, and let $f=g-g(e)$. Then $f\in\Lip_0(X)$ and $\duality{u_{pq},f}=\Phi f(p,q)=1$. For every $\zeta\in A_p$ there is a net $\set{(x_i,y_i): i\in I}$ in $\widetilde{X}$ that converges to $\zeta$ in $\beta\widetilde{X}$ and such that $x_i,y_i$ are eventually in $U$, hence $\Phi f(x_i,y_i)=0$ eventually, and so $\Phi f(\zeta)=0$. Similarly, $\Phi f(\zeta)=0$ for $\zeta\in A_q$. Thus, for this particular $f$ the integrals in \eqref{eq:main_tm_eq} vanish and we get
\begin{align*}
1 &= \lambda(a_1-b_1)+(1-\lambda)(a_2-b_2) \\
&\leq \lambda(\abs{a_1}+\abs{b_1}+\norm{\mu_1^{\prime}})+(1-\lambda)(\abs{a_2}+\abs{b_2}+\norm{\mu_2^{\prime}}) \\
&= \lambda\norm{\mu_1}+(1-\lambda)\norm{\mu_2}\leq 1 \text{.}
\end{align*}
It follows that $\norm{\mu_1^\prime}=\norm{\mu_2^\prime}=0$, so $\dual{x}_1$ and $\dual{x}_2$ are multiples of $u_{pq}$. Thus $u_{pq}\in\Ext(\ball{\dual{\Lip_0(X)}})$.
\end{proof}

For compact $X$, Theorem \ref{tm:pep_fx} can be restated to involve (unpreserved) extreme points of $\ball{\lipfree{X}}$, too:

\begin{theorem}
\label{tm:pep_fk}
Let $X$ be a compact pointed metric space, and let $p,q$ be distinct points of $X$. Then the following are equivalent:
\begin{enumerate}[\upshape (i)]
\item $u_{pq}$ is a preserved extreme point of $\ball{\lipfree{X}}$,
\item $u_{pq}$ is an extreme point of $\ball{\lipfree{X}}$,
\item no point of $X$ lies strictly between $p$ and $q$.
\end{enumerate}
\end{theorem}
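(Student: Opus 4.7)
The plan is to reduce Theorem \ref{tm:pep_fk} to Theorem \ref{tm:pep_fx} by exploiting the fact that compact Hausdorff spaces are their own Stone-\v{C}ech compactifications. I will prove the circular chain (i)$\Rightarrow$(ii)$\Rightarrow$(iii)$\Rightarrow$(i), two of whose links are already essentially done.

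The implication (i)$\Rightarrow$(ii) is immediate from the very definition of preserved extreme point: a preserved extreme point of $\ball{\lipfree{X}}$ is, in particular, an extreme point of $\ball{\lipfree{X}}$. The implication (ii)$\Rightarrow$(iii) is exactly the content of Proposition \ref{pr:aligned_combination}(a): if some $r\in X\setminus\set{p,q}$ were to lie strictly between $p$ and $q$, the convex decomposition $u_{pq}=\frac{d(p,r)}{d(p,q)}u_{pr}+\frac{d(r,q)}{d(p,q)}u_{rq}$ would prevent $u_{pq}$ from being extreme. So neither of these directions needs any new argument.

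The core of the proof is the implication (iii)$\Rightarrow$(i). Compact metric spaces are complete, so Theorem \ref{tm:pep_fx} is available. The point I would invoke is the standard topological fact that if $X$ is compact Hausdorff then $\beta X=X$ (the identity is already a continuous map from $X$ into a compact Hausdorff space, so it trivially satisfies the universal property). Under this identification, the metric extension $\metex{\,\cdot\,;p,q}\colon\beta X\to[0,\infty]$ coincides with the original continuous function $\metex{\,\cdot\,;p,q}\colon X\to[0,\infty)$, and in particular a point of $\beta X$ that lies strictly between $p$ and $q$ is just a point of $X$ with the same property. Hence condition (iii) of the present theorem is literally condition (ii) of Theorem \ref{tm:pep_fx}, and applying that theorem gives (i).

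There is no real technical obstacle here; the work has all been done in Theorem \ref{tm:pep_fx}. The only mildly delicate point to mention is the justification that $\beta X=X$ when $X$ is compact, ensuring that the ``$\beta X$'' condition of Theorem \ref{tm:pep_fx} collapses to a condition purely about points of $X$ when restricted to the compact setting.
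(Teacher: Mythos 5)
Your proposal is correct and coincides with the paper's own argument: (i)$\Rightarrow$(ii) is trivial, (ii)$\Rightarrow$(iii) is Proposition \ref{pr:aligned_combination}(a), and (iii)$\Rightarrow$(i) follows from Theorem \ref{tm:pep_fx} since $\beta X=X$ for compact $X$. (Your appeal to completeness is harmless but unnecessary, as Theorem \ref{tm:pep_fx} is stated for arbitrary pointed metric spaces.)
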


\begin{proof}
(i)$\Rightarrow$(ii) is trivial, (ii)$\Rightarrow$(iii) is Proposition \ref{pr:aligned_combination}(a), and (iii)$\Rightarrow$(i) is a consequence of Theorem \ref{tm:pep_fx} because $\beta X=X$.
\end{proof}

We remark that the hypothesis that $X$ is compact is essential in Theorem \ref{tm:pep_fk}. Simple counterexamples may be constructed using Theorem \ref{tm:pep_fx}. For instance, consider the subset $X$ of $c_0$ consisting of $e=0$, $p=2e_1$, and $q_n=e_1+(1+\frac{1}{n})e_n$ for $n\geq 2$, where $\set{e_n}$ is the canonical basis. Since $d(q_n,q_m)>1$ for different $n,m\geq 2$, the sequence $\set{q_n}$ has no cluster point in $X$, and $X$ is not compact. Also $\metex{q_n;p,e}=\frac{2}{n}$, so no point of $X$ lies strictly between $p$ and $e$. However, if $\xi$ is a cluster point of $q_n$ in $\beta X$, then $\metex{\xi;p,e}=0$, hence $u_{pe}$ is not a preserved extreme point. The recent preprint \cite{GPR_arxiv} presents a stronger example where there are no triples of aligned points and no preserved extreme points at all (see Remark 4.17).

\begin{definition}
We say that the pointed metric space $X$ is \emph{concave} if $u_{pq}$ is a preserved extreme point of $\ball{\lipfree{X}}$ for any distinct $p,q\in X$.
\end{definition}

In \cite[Open Problem in p. 53]{weaver}, N. Weaver conjectured that any compact metric space without triples of metrically aligned points is concave. As an immediate consequence of Theorem \ref{tm:pep_fk}, we obtain that the conjecture is actually a characterization of such spaces. We have recently learned that N. Weaver has independently found a proof of this fact \cite{weaver_pc}, which will appear in the second edition of \cite{weaver}.

\begin{corollary}
\label{co:concave}
Let $X$ be a compact pointed metric space. Then $X$ is concave if and only if no triple of distinct points of $X$ is metrically aligned.
\end{corollary}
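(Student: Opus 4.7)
The plan is to derive this corollary directly from Theorem \ref{tm:pep_fk}, which has just been established for compact $X$ and gives the equivalence between $u_{pq}$ being a preserved extreme point of $\ball{\lipfree{X}}$ and the absence of points of $X$ strictly between $p$ and $q$. Since concavity is defined as the property that $u_{pq}$ is a preserved extreme point for all distinct $p,q \in X$, and metric alignment of a triple $\{p,q,r\}$ means precisely that one of the three lies strictly between the other two, the statement reduces to a straightforward quantifier swap.

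More concretely, I would proceed by proving the two directions separately. For the necessity direction, assume $X$ is concave, fix any triple of distinct points $p,q,r \in X$, and consider the three pairs. For each pair (say $p,q$), $u_{pq}$ is a preserved extreme point by concavity, so by the implication (i)$\Rightarrow$(iii) of Theorem \ref{tm:pep_fk}, $r$ does not lie strictly between $p$ and $q$. Applying the same reasoning to the other two pairs, none of the three points lies strictly between the other two, so $\{p,q,r\}$ is not metrically aligned.

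For the sufficiency direction, assume that no triple of distinct points in $X$ is metrically aligned, and fix any distinct $p,q \in X$. Then for every $r \in X \setminus \{p,q\}$, the triple $\{p,q,r\}$ is not metrically aligned, so in particular $r$ does not lie strictly between $p$ and $q$. By the implication (iii)$\Rightarrow$(i) of Theorem \ref{tm:pep_fk}, $u_{pq}$ is a preserved extreme point of $\ball{\lipfree{X}}$. Since this holds for all distinct $p,q$, the space $X$ is concave. There is no real obstacle here — the entire difficulty was already absorbed into Theorem \ref{tm:pep_fk}, and the corollary is essentially a reformulation in the language of Weaver's conjecture.
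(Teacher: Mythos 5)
Your proposal is correct and coincides with the paper's argument: the corollary is deduced directly from Theorem \ref{tm:pep_fk} by translating concavity (preserved extremality of every $u_{pq}$) into the condition that no $r$ lies strictly between any pair $p,q$, which is exactly the absence of metrically aligned triples. Nothing further is needed.
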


Examples of concave spaces are \emph{H\"{o}lder spaces} $X^{\alpha}$, which are constructed from metric spaces $X$ by equipping them with the metric $d^{\alpha}$, where $\alpha\in (0,1)$. In \cite[Prop. 2.4.5]{weaver} they are shown to be concave in general. From Corollary \ref{co:concave}, we obtain an alternative proof that compact H\"{o}lder spaces are concave by noticing that for distinct $p,q,r\in X$ we have
\begin{equation*}
d(p,q)^{\alpha}\leq(d(p,r)+d(r,q))^{\alpha}<d(p,r)^{\alpha}+d(r,q)^{\alpha}
\end{equation*}
so that no set of three distinct points can be metrically aligned in $X^{\alpha}$. We remark that not all compact concave spaces are H\"{o}lder spaces, as shown by the following example:

\begin{example}
Consider decreasing sequences $\lambda_n\rightarrow 1$ and $a_n\rightarrow 0$, with $a_1<1$. Then $a_n^{\lambda_n}+(1-a_n)^{\lambda_n}<(a_n+1-a_n)^{\lambda_n}=1$, so we can choose positive $b_n\rightarrow 0$ such that
\begin{equation*}
(a_n^2+b_n^2)^{\lambda_n/2}+((1-a_n)^2+b_n^2)^{\lambda_n/2}<1 \text{.}
\end{equation*}
Note that the terms in parentheses are all smaller than 1. Let $X$ be the subset of $\ell^2$ consisting of $0$, $e_1$, and $r_n=a_ne_1+b_ne_n$ for $n\geq 2$, where $\set{e_n}$ is the canonical basis. Then $X$ is compact because $r_n\rightarrow 0$, and any triple of distinct points of $X$ spans an affine subspace of $\ell^2$ of dimension 2 so they cannot be metrically aligned because $\ell^2$ is strictly convex; hence $X$ is concave by Corollary \ref{co:concave}. However $X$ cannot be $\alpha$-H\"{o}lder for any $0<\alpha<1$: suppose there was a metric $d$ on $X$ such that $\norm{x-y}_2=d(x,y)^{\alpha}$ for any $x,y\in X$, and choose $n$ such that $\lambda_n<1/\alpha$. Then
\begin{align*}
d(0,r_n)+d(r_n,e_1) &= \norm{r_n}_2^{1/\alpha}+\norm{e_1-r_n}_2^{1/\alpha} \\
&= (a_n^2+b_n^2)^{1/2\alpha}+((1-a_n)^2+b_n^2)^{1/2\alpha} \\
&< (a_n^2+b_n^2)^{\lambda_n/2}+((1-a_n)^2+b_n^2)^{\lambda_n/2} \\ &<1=d(0,e_1)
\end{align*}
violating the triangle inequality.
\end{example}

\section{Open questions}

Theorem \ref{tm:pep_fx} provides a characterisation of preserved extreme points in Lipschitz-free spaces in terms of the geometry of the underlying metric space. In the recent preprint \cite{GPR_arxiv}, L. Garc\'ia, A. Proch\'azka and A. Rueda give a similar purely geometric characterisation for strongly exposed points. The authors say that a pair $(p,q)$ of distinct points of $X$ has \emph{property (Z)} if for every $\varepsilon>0$ there is $r\in X\setminus\set{p,q}$ such that
\begin{equation*}
\metex{r;p,q}\leq\varepsilon\min\set{d(p,r),d(q,r)}
\end{equation*}
and then prove the following:

\begin{theorem}
\label{tm:strongly_exposed}
If $X$ is a pointed metric space, then an element $u_{pq}$ is a strongly exposed point of $\ball{\lipfree{X}}$ if and only if $(p,q)\in\widetilde{X}$ does not have property (Z).
\end{theorem}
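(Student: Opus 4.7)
The plan is to prove the two implications separately; the forward direction (property (Z) implies $u_{pq}$ is not strongly exposed) is a direct construction, while the reverse (no property (Z) implies strong exposure) requires producing an explicit exposing functional and is more technical.

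For the forward direction, suppose $(p, q)$ has property (Z). For each $n$ extract $r_n \in X \setminus \{p, q\}$ with $\metex{r_n; p, q} \leq \frac{1}{n}\min\{d(p, r_n), d(q, r_n)\}$; after passing to a subsequence and possibly swapping $p$ and $q$, we may assume $d(p, r_n) \leq d(r_n, q)$. Let $v_n := u_{p, r_n} \in \ball{\lipfree{X}}$. For any $f \in \ball{\Lip_0(X)}$ with $\Phi f(p, q) = 1$, the identity $f(p) - f(q) = d(p, q)$ combined with $f(r_n) - f(q) \leq d(r_n, q)$ yields
\begin{equation*}
\Phi f(p, r_n) \geq \frac{d(p, q) - d(r_n, q)}{d(p, r_n)} = 1 - \frac{\metex{r_n; p, q}}{d(p, r_n)} \geq 1 - \frac{1}{n},
\end{equation*}
so $\duality{v_n, f} \to 1$. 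But the $1$-Lipschitz function $h(x) := d(x, \set{p, q})$ satisfies $\Phi h(p, q) = 0$ and $\Phi h(p, r_n) = -1$ (by our WLOG), hence $\norm{v_n - u_{pq}} \geq 1$. Consequently no $f$ can strongly expose $u_{pq}$.

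For the reverse direction, assume $(p, q)$ does not have property (Z), so there is $\varepsilon_0 > 0$ with $\metex{r; p, q} \geq \varepsilon_0 \min\set{d(p, r), d(q, r)}$ for every $r \in X \setminus \set{p, q}$; note that by Theorem \ref{tm:pep_fx}, $u_{pq}$ is already a preserved extreme point. The natural candidates for the exposing functional are $f(x) := d(x, q)$ or the symmetrized extension $f_3(x) := \frac{1}{2}(d(x, q) - d(x, p) + d(p, q))$, each suitably shifted so as to lie in $\Lip_0(X)$, and both satisfying $\Phi f(p, q) = L(f) = 1$. Given a sequence $\mu_n \in \ball{\lipfree{X}}$ with $\duality{\mu_n, f} \to 1$, the idea is to represent $\mu_n = \dual{\Phi}\nu_n$ with $\nu_n \in \ball{M(\beta\widetilde{X})}$ as in Lemma \ref{lm:measure_support}, pass to a weak-$\ast$ cluster point $\nu$ of $\set{\nu_n}$, and show $\nu = \delta_{(p, q)}$ by the same kind of analysis of $\set{\Phi f = 1} \subset \beta\widetilde{X}$ used in Proposition \ref{pr:dpq_concave}.

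The main obstacle is twofold. First, the choice of $f$ must be delicate: even under no property (Z), the level set $\set{\Phi f = 1}$ may contain pairs $(x, y) \in \widetilde{X}$ arising from alignments unrelated to $(p, q)$. The averaged $f_3$ is an improvement on $f = d(\cdot, q)$ in that $\Phi f_3(x, y) = 1$ forces the double condition that $y$ lies between $x$ and $q$ and $x$ lies between $y$ and $p$, which yields $\metex{x; p, q} = \metex{y; p, q} > 0$ for $(x, y) \neq (p, q)$; but this alone may not single out $(p, q)$ in all geometries, so additional test functions may be required. Second, and more fundamentally, the weak-$\ast$ convergence $\mu_n \rightharpoonup u_{pq}$ in $\ddual{\lipfree{X}}$ that one obtains from such a measure argument does not automatically upgrade to norm convergence in $\lipfree{X}$, so the argument must produce a quantitative modulus bound $\norm{\mu_n - u_{pq}} \leq \omega(1 - \duality{\mu_n, f})$ with $\omega(0^+) = 0$, where the uniform estimate $\metex{r; p, q} \geq \varepsilon_0 \min\set{d(p, r), d(q, r)}$ enters directly into the control. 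This is the step I expect to require the most work.
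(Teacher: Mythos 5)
This theorem is not proved in the paper at all: it is quoted from the preprint \cite{GPR_arxiv}, so there is no in-paper argument to compare against, and your proposal has to stand on its own. Its first half does: the forward direction is complete and correct. Choosing $r_n$ with $\metex{r_n;p,q}\leq\frac1n\min\set{d(p,r_n),d(q,r_n)}$ and (after a subsequence/symmetry reduction) $d(p,r_n)\leq d(q,r_n)$, your estimate $\Phi f(p,r_n)\geq 1-\frac1n$ holds for \emph{every} $f\in\ball{\Lip_0(X)}$ with $\Phi f(p,q)=1$, i.e.\ for every candidate exposing functional simultaneously, while $h(x)=d(x,\set{p,q})$ (shifted to vanish at $e$) gives $\norm{u_{pr_n}-u_{pq}}\geq 1$; hence no functional can strongly expose $u_{pq}$.

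The reverse direction, however, is only a sketch, and the gap you yourself flag is the entire content of that implication. Two concrete problems. First, neither of your candidate functionals works: take $X=\set{p,x,y,q}$ with $d(p,x)=d(x,y)=d(y,q)=1$, $d(p,y)=d(x,q)=2$, $d(p,q)=5/2$. Here $(p,q)$ fails property (Z) (the only candidates $r=x,y$ give $\metex{r;p,q}=\tfrac12=\tfrac12\min\set{d(p,r),d(q,r)}$), so by the theorem $u_{pq}$ is strongly exposed; yet $\Phi f(x,y)=1$ both for $f=d(\cdot,q)$ and for $f_3=\tfrac12(d(\cdot,q)-d(\cdot,p)+d(p,q))$, so neither functional even exposes $u_{pq}$, let alone strongly. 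Your remark that ``additional test functions may be required'' cannot repair this, since strong exposure is a property of a single functional. Second, even with a correct functional, the measure/weak-$\ast$ cluster argument modeled on Lemma \ref{lm:measure_support} and Proposition \ref{pr:dpq_concave} can only yield weak-$\ast$ information, whereas strong exposure demands that the slices $\set{\mu\in\ball{\lipfree{X}}:\duality{\mu,f}>1-\delta}$ have norm-diameter tending to $0$; you explicitly defer this quantitative step, and it is exactly where the hypothesis $\metex{r;p,q}\geq\varepsilon_0\min\set{d(p,r),d(q,r)}$ must be exploited (in \cite{GPR_arxiv} this is done with a more carefully normalized functional, built from $d(\cdot,p)$ and $d(\cdot,q)$ with a denominator of the type $d(\cdot,p)+d(\cdot,q)$, together with an estimate showing that all molecules $u_{xy}$ in a small slice are norm-close to $u_{pq}$ and a reduction of slice diameter to the molecules it contains). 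As it stands, the ``only if'' half is proved and the ``if'' half is not.
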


Notice that the condition in Lemma \ref{lm:dpq_nonempty} implies that the pair $(p,q)$ has property (Z); hence, the construction from Example \ref{ex:ap_aq} yields a preserved extreme point $u_{pq}$ that is not strongly exposed. One key difference between the conditions in Theorems \ref{tm:pep_fx} and \ref{tm:strongly_exposed} is the following: both involve the existence of nets $\set{r_i}$ such that $\metex{r_i;p,q}\rightarrow 0$, but property (Z) allows these nets to cluster at $p$ or $q$ whereas our condition explicitly prevents this.

Since Theorem \ref{tm:pep_fx} is essentially the converse of Proposition \ref{pr:aligned_combination}(b), one may ask whether Proposition \ref{pr:aligned_combination}(a) provides a similar geometric characterisation of extreme points in $\ball{\lipfree{X}}$:

\begin{question}
\label{q:ext_align}
Is $u_{pq}$ an extreme point of $\ball{\lipfree{X}}$ whenever no point of $X$ lies strictly between $p$ and $q$?
\end{question}

Theorem \ref{tm:pep_fk} shows that the answer to Question \ref{q:ext_align} is positive when $X$ is compact, but the general case remains unsolved.

Moreover, when $X$ is complete, all preserved extreme points are of the form $u_{pq}$, which strongly restricts their search (note that strongly exposed points are always preserved extreme \cite{GMZ14}), but we do not know whether the same restriction applies to extreme points in general:

\begin{question}
\label{q:ext_upq}
If $X$ is complete, are all extreme points of $\ball{\lipfree{X}}$ of the form $u_{pq}$?
\end{question}

The answer to Question \ref{q:ext_upq} is also known to be positive in some particular cases. Suppose $X$ is compact, and let $\lip_0(X)$ be the subspace of $\Lip_0(X)$ consisting of those functions $f$ satisfying the condition: for every $\varepsilon>0$ there is $\delta>0$ such that $\abs{\Phi f(p,q)}<\varepsilon$ whenever $d(p,q)<\delta$. We say that $\lip_0(X)$ \emph{separates points uniformly} if there is a constant $C\geq 1$ such that for any $p,q\in X$ there is $f\in\lip_0(X)$ with $L(f)\leq C$ and $\abs{f(p)-f(q)}=d(p,q)$. If this holds, then $\lipfree{X}$ is isometrically isomorphic to $\dual{\lip_0(X)}$ and Question \ref{q:ext_upq} has a positive answer for this $X$ \cite[Th. 3.3.3 and Cor. 3.3.6]{weaver}. Applying Theorem \ref{tm:pep_fk}, we summarize this as follows:

\begin{corollary}
\label{cr:compact_lip}
If $X$ is a compact pointed metric space such that $\lip_0(X)$ separates points uniformly, then the extreme points of $\ball{\lipfree{X}}$ are precisely the elements $u_{pq}$ where $p,q$ are distinct points of $X$ such that $d(p,q)<d(p,r)+d(q,r)$ for all $r\in X\setminus\set{p,q}$.
\end{corollary}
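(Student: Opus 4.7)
The plan is to assemble this as a direct combination of Theorem \ref{tm:pep_fk} with the two structural results for $\lip_0(X)$ cited in the paragraph preceding the statement. First I would invoke \cite[Th. 3.3.3]{weaver}: under the uniform separation hypothesis on $\lip_0(X)$, the space $\lipfree{X}$ is isometrically isomorphic to $\dual{\lip_0(X)}$, so that $\ball{\lipfree{X}}$ is identified as a convex set with the dual unit ball $\ball{\dual{\lip_0(X)}}$. Then \cite[Cor. 3.3.6]{weaver} applies under the same hypothesis to force every extreme point of this ball to be of the form $u_{pq}$ for some distinct $p,q\in X$; this narrows the search to this one-parameter family of candidates.

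Second, I would feed each such candidate $u_{pq}$ into Theorem \ref{tm:pep_fk}. Since $X$ is compact, that theorem gives the equivalence between ``$u_{pq}$ is an extreme point of $\ball{\lipfree{X}}$'' and ``no point of $X$ lies strictly between $p$ and $q$'', which is precisely the condition $d(p,q) < d(p,r) + d(q,r)$ for every $r \in X \setminus \set{p,q}$. Combining the two steps yields the claimed characterisation: the extreme points of $\ball{\lipfree{X}}$ are exactly those $u_{pq}$ satisfying the stated strict triangle inequality.

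No significant obstacle is expected; this is a bookkeeping corollary rather than a new argument. The only care needed is the essentially trivial observation that the isometric isomorphism $\lipfree{X} \cong \dual{\lip_0(X)}$ carries extreme points of the unit ball to extreme points of the unit ball, so that the extremal structures of the two unit balls coincide and the results cited from \cite{weaver} may be applied on whichever side is convenient.
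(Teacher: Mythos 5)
Your proposal is correct and follows essentially the same route as the paper: it cites \cite[Th. 3.3.3 and Cor. 3.3.6]{weaver} to reduce to candidates of the form $u_{pq}$ via the duality $\lipfree{X}\cong\dual{\lip_0(X)}$, and then applies Theorem \ref{tm:pep_fk} to identify exactly which $u_{pq}$ are extreme. No gaps.
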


The condition in Corollary \ref{cr:compact_lip} is not satisfied in general (for instance, $\lip_0(X)$ may be trivial), but it is known to hold for compact H\"{o}lder spaces and for the Cantor ternary set \cite[Prop. 3.2.2]{weaver}. More recently, A. Dalet showed that it is also satisfied whenever the compact $X$ is countable \cite{dalet_1} or ultrametric \cite{dalet_2}.

\subsection*{Acknowledgements}
The research of the second author was partially supported by MINECO grant MTM2014-57838-C2-1-P and Fundaci\'on S\'eneca, Regi\'on de Murcia  grant 19368/PI/14.


\end{document}